\let\frak\mathfrak
\def\>{\relax\ifmmode\mskip.666667\thinmuskip\relax\else\kern.111111em\fi}
\def\<{\relax\ifmmode\mskip-.333333\thinmuskip\relax\else\kern-.0555556em\fi}
\def\vsk#1>{\vskip#1\baselineskip}
\def\vv#1>{\vadjust{\vsk#1>}\ignorespaces}
\def\vvn#1>{\vadjust{\nobreak\vsk#1>\nobreak}\ignorespaces}
  \let\ssize\scriptstyle
\let\sssize\scriptscriptstyle
\let\Medskip\medskip
\def\medskip{\par\Medskip}
\let\Bigskip\bigskip
\def\bigskip{\par\Bigskip}
\let\Maketitle\maketitle
\def\maketitle{\Maketitle\thispagestyle{empty}\let\maketitle\empty}
\newtheorem{thm}{Theorem}[section]
\newtheorem{cor}[thm]{Corollary}
\newtheorem{lem}[thm]{Lemma}
\theoremstyle{definition}                                  
\newtheorem{exmp}{Example}[section]
\numberwithin{equation}{section}
\theoremstyle{definition}
\newtheorem*{rem}{Remark}
\let\mc\mathcal
\let\nc\newcommand
\let\al\alpha
\let\bt\beta
\let\la\lambda
\let\phi\varphi
\let\si\sigma
\let\Om\Omega
\let\der\partial
\let\ox\otimes
\let\geq\geqslant
\let\leq\leqslant
\let\on\operatorname
\let\bi\bibitem
\let\bs\boldsymbol
\def\C{{\mathbb C}}
\def\Z{{\mathbb Z}}
\def\B{{\mc B}}
\def\F{{\mc F}}
\def\+#1{^{\{#1\}}}
\def\End{\on{End}}
\def\Wr{\on{Wr}}
\def\beq{\begin{equation}}
\def\eeq{\end{equation}}
\def\be{\begin{equation*}}
\def\ee{\end{equation*}}
\nc{\bea}{\begin{eqnarray*}}
\nc{\eea}{\end{eqnarray*}}
\nc{\bean}{\begin{eqnarray}}
\nc{\eean}{\end{eqnarray}}
\nc{\Ref}[1]{{\rm(\ref{#1})}}
\def\g{{\mathfrak g}}
\nc{\Il}{{\mc I_{\bs\la}}}
\nc{\bla}{{\bs\la}}
\nc{\Fla}{\F_\bla}
\nc{\tfl}{{T^*\Fla}}
\nc{\GL}{{GL_n(\C)}}
\nc{\GLC}{{GL_n(\C)\times\C^*}}
\let\sd s 
\def\ddk_#1{\kk_{#1}\<\>\frac\der{\der\<\>\kk_{#1}}}
\def\bul{\mathbin{\raise.2ex\hbox{$\sssize\bullet$}}}
\def\intt{\mathchoice
{\mathop{\raise.2ex\rlap{$\,\,\ssize\backslash$}{\intop}}\nolimits}
{\mathop{\raise.3ex\rlap{$\,\sssize\backslash$}{\intop}}\nolimits}
{\mathop{\raise.1ex\rlap{$\sssize\>\backslash$}{\intop}}\nolimits}
{\mathop{\rlap{$\sssize\<\>\backslash$}{\intop}}\nolimits}}
\let\kk q 
\let\cc c
\let\Ko K
\def\GZ/{Gelfand-Zetlin}
\def\KZ/{{\slshape KZ\/}}
\def\qKZ/{{\slshape qKZ\/}}
\def\XXX/{{\slshape XXX\/}}
\def\Sym{\on{Sym}}
\nc{\A}{{\mc C}}
\def\FF{{\mathbb F}}
\def\Sing{{\on{Sing}}}
\def\sll{{\frak{sl}}}
\def\slt{{\frak{sl}_2}}
\def\Ant{{\on{Ant}}}
\def\Ik{{\mc I_k}}
\def\A{{\mathbb A}}
\def\AA{{\mc A}}
\begin{document}

\hrule width0pt
\vsk->

\title[Remarks on the Gaudin model  modulo  $p$]
{Remarks on the Gaudin model  modulo  $p$}

\author
[ Alexander Varchenko]
{Alexander Varchenko$\>^\star$}

\maketitle

\begin{center}
{\it $^{\star}\<$Department of Mathematics, University
of North Carolina at Chapel Hill\\ Chapel Hill, NC 27599-3250, USA\/}

\end{center}

{\let\thefootnote\relax
\footnotetext{\vsk-.8>\noindent
$^\star\<${\sl E\>-mail}:\enspace anv@email.unc.edu\>,
supported in part by NSF grant  DMS-1665239}}

\begin{abstract}
We discuss the Bethe ansatz in the Gaudin model on the tensor product of finite-dimensional
$\slt$-modules over the field $\FF_p$ with $p$ elements, where $p$ is a prime number.
We define the Bethe ansatz equations and show that if $(t^0_1,\dots,t^0_k)$ is a 
solution of the Bethe ansatz equations, then the corresponding 
Bethe vector is an eigenvector of the Gaudin Hamiltonians. 
We characterize solutions $(t^0_1,\dots,t^0_k)$ of the Bethe ansatz equations
as certain  two-dimensional subspaces of the space
of polynomials $\FF_p[x]$. We consider the case
when the number of parameters $k$ equals 1. In that case we show that the Bethe algebra,
generated by the Gaudin Hamiltonians, is isomorphic to the algebra of functions on the 
scheme defined by the Bethe ansatz equation. If $k=1$ and in addition the tensor product 
is the product of vector representations, then the Bethe algebra is also
 isomorphic to the algebra of functions on the fiber of a suitable Wronski map.

\end{abstract}

\bigskip

\hfill In memory of Egbert Brieskorn (1936--2013)

{\small \tableofcontents  }

\setcounter{footnote}{0}
\renewcommand{\thefootnote}{\arabic{footnote}}

\section{Introduction}
The Gaudin model is a certain collection of commuting linear operators on the tensor product $V=\ox_{i=1}^nV_i$
of representations of a Lie algebra $\g$.  The operators are called the Gaudin Hamiltonians. 
The Bethe ansatz is a method used to construct common eigenvectors and eigenvalues of the Gaudin
operators. 
One looks for an eigenvector in a certain form $W(t)$, where  $W(t)$ is a $V$-valued function
of some parameters $t=(t_1,\dots,t_k)$. One introduces a system of equations on the parameters, called
the Bethe ansatz equations, and shows that if $t^0$ is a solution of the system, then the vector $W(t^0)$ is
an eigenvector of the Gaudin Hamiltonians, see for example 
\cite{B,G,FFR,MV1,MV2,MTV1,MTV4,RV,SchV,SV1, V1,V2,V3}.
The Gaudin model has strong relations with the Schubert calculus  and real algebraic geometry, see
for example  \cite{MTV2, MTV3, So}.  

All that is known in the case when the Lie algebra  $\g$ is defined
over the field $\C$ of complex numbers. 
In the paper we consider the case of the field $\FF_p$ with $p$ elements, where $p$ is a prime number, cf.
\cite{SV3}.  We carry out 
the first steps of the Bethe ansatz, the deeper parts of the Gaudin model over a 
finite field  remain to be developed.
We consider the case of the Lie algeba $\slt$, where the notations 
and constructions  are shorter and simpler. 

It is known that over $\C$, the Gaudin model is a semi-classical  limit of the KZ differential 
equations of conformal field theory, and the construction
of the multidimensional hypergeometric solutions of the KZ differential equations lead, in that limit,
 to the Bethe ansatz construction of  eigenvectors of the Gaudin Hamiltonians,  see \cite{RV}. The $\FF_p$-analogs of the hypergeometric solutions of the KZ differential equations were constructed recently in \cite{SV3}, see also  \cite{V5}.  Thus the 
constructions of this paper may be thought of as
a semi-classical limit of the constructions in \cite{SV3}.

In Section \ref{2} we define the Bethe ansatz equations and show that if $(t^0_1,\dots,t^0_k)$ is a 
solution of the Bethe ansatz equations, then the corresponding 
Bethe vector is an eigenvector of the Gaudin Hamiltonians.
In Section \ref{TD} we characterize solutions $(t^0_1,\dots,t^0_k)$ of the Bethe ansatz equations
as certain  two-dimensional subspaces of the space of polynomials $\FF_p[x]$.
In Section \ref{sec ex} we consider the case  in which the number $k$ of the
parameters equals 1. In that case
we show that the Bethe algebra,
generated by the Gaudin Hamiltonians, is isomorphic to the algebra of functions on the 
scheme defined by the Bethe ansatz equation, see Theorem \ref{thm AB}.  
If $k=1$ and in addition the tensor product is the product of vector representations, 
then the Bethe algebra is also isomorphic to the algebra of functions on the fiber of a suitable Wronski map, 
see  Corollary
\ref{cor ABC}.

\medskip
The author thanks  W.\,Zudilin for useful discussions
 and the Hausdorff Institute for Mathematics in Bonn for hospitality in May-July of 2017.

\section{$\sll_2$ Gaudin model}
\label{2}

\subsection{$\slt$ Gaudin model over $\C$}
Let $e,f,h$ be the
standard basis of the  complex Lie algeba $\slt$ with $[e,f]=h$, $[h,e]=2e$, $[h,f]=-2f$.
The element  
\bean
\label{Casimir}
\Omega =  e \otimes f + f \otimes e +
                       \frac{1}{2} h \otimes h\  \in\  \slt \ox\slt
                       \eean
 is called the Casimir element.  Given $n$, for $1\leq i<j\leq n$ let $\Om^{(i,j)} \in (U(\slt))^{\ox n}$
 be the element equal to
 $\Omega$ in the $i$-th and $j$-th factors and to 1 in other factors.
 Let $z^0=(z^0_1,\dots,z^0_n)\in\C^n$ have distinct coordinates.
 For  $s=1,\dots,n$  introduce
 \bean
 \label{Ham}
 H_s(z^0)= \sum_{l\ne s}\frac{\Om^{(s,l)}}{z^0_s-z^0_l} \ \in \ (U(\slt))^{\ox n},
 \eean
 the {\it Gaudin Hamiltonians}, see \cite{G}.
 For  any $s,l$, we have
 \bean
 \label{Flat}
 \left[H_s (z^0), H_l (z^0) \right]
=0,
\eean 
and for any $x\in\slt$ and $s$ we have
\bean
\label{h invar}
[H_s(z^0), x\ox 1\ox\dots\ox 1+\dots + 1\ox\dots\ox 1\ox x] =0.
\eean

Let $V= \ox^n_{i=1}V_i$ be a tensor product of $\slt$-modules.  The 
commutative
subalgebra of $\End(V)$ generated by the Gaudin Hamiltonians $H_i(z^0)$, $i=1,\dots,n$, and the identity operator
$\on{Id}$ is called the  {\it Bethe algebra} of $V$.
If $W\subset V$ is a subspace invariant with respect to the Bethe algebra, then the restriction 
of the Bethe algebra  to $W$ is called the Bethe algebra of $W$, denoted by $\B(W)$.

The general problem is to describe the Bethe algebra, its common eigenvectors and eigenvalues.

\subsection{Irreducible $\slt$-modules}
For a nonnegative integer $i$ denote by $L_i$ the irreducible $i+1$-dimensional module with
basis $v_i, fv_i,\dots,f^iv_i$ and action $h.f^kv_i=(i-2k)f^kv_i$ for $k=0,\dots,i$; $f.f^kv_i=f^{k+1}v_i$ for $k=0,\dots, i-1$,
$f.f^iv_i=0$; $e.v_i=0$, $e.f^kv_i=k(i-k+1)f^{k-1}v_i$ for $k=1,\dots,i$.

For $m = (m_1,\dots,m_n) \in \Z^n_{\geq 0}$, denote
$|m| = m_1 + \dots + m_n$ and
$L^{\otimes m} =  L_{m_1}  \otimes  \dots  \otimes L_{m_n}$.  
 For
$J=(j_1,\dots,j_n) \in \mathbb{Z}_{\geq 0}^n$, with $j_s\leq m_s$ for $s=1,\dots,n$, the vectors
\bean
\label{fv}
f_Jv_m := f^{j_1}v_{m_1}\otimes \dots \otimes f^{j_n}v_{m_n} 
\eean
form a basis of
$L^{\otimes m}$. We have
\bea
f.f_Jv_m &=& \sum_{s=1}^n f_{J+1_s}v_m,
\qquad
h.f_Jv_m = ( |m|-2|J|) f_Jv_m,
\\
&&
e.f_Jv_m = \sum_{s=1}^n j_s (m_s-j_s+1) f_{J-1_s}v_m.
\eea
For $\lambda \in \Z$, introduce the weight subspace $L^{\otimes m}[\lambda] = \{ \ v \in L^{\otimes m} \
| \ h.v = \lambda v \}$ and the singular weight subspace
$\Sing L^{\otimes m}
[\lambda] = \{ \ v \in L^{\otimes m}[\lambda] \ | \ h.v = \lambda v, \
e.v = 0 \} $. We have the weight decomposition
$L^{\ox m} = \oplus_{k=0}^{|m|}L^{\ox m}[|m|-2k]$.
Denote
\bea
\Ik =\{ J\in \Z^n_{\geq 0}\ | \ |J|=k, \,j_s\leq m_s,\ s=1,\dots,n\}.
\eea
The vectors $(f_Jv)_{J\in\Ik}$ form a basis of $L^{\otimes m}[|m|-2k]$.

\smallskip

The Bethe algebra $\B(L^{\ox m})$ preserves each of the subspaces $L^{\otimes m}[|m|-2k]$
and $\Sing L^{\otimes m}[|m|-2k]$ by \Ref{h invar}.  If $w\in L^{\ox m}$ is a common eigenvector of the Bethe algebra,
 then for any $x\in\slt$ the vector  
$x.w$ is also an eigenvector with the same eigenvalues. 
These  observations
show that in order
to describe  $\B(L^{\ox m})$, its eigenvectors and eigenvalues it is enough  
to describe for all $k$
the algebra  $\B(\Sing L^{\ox m}[|m|-2k])$, its eigenvectors and eigenvalues.

\subsection
{Bethe ansatz on 
 $\Sing L^{\otimes m} \big[ |m| - 2k \big]$ over $\C$}
  \label{Sol in C}

Given $k, n \in \Z_{>0}$, $m = ( m_1, \dots , m_n) \in \Z_{> 0}^n$.
Let $z^0=(z^0_1,\dots,z^0_n)\in\C^n$ have distinct coordinates.
The system of the {\it Bethe ansatz equations}  is the system of equations
\bean
\label{BAE}
\sum_{j\ne i}\frac 2{t_i-t_j} -\sum_{s=1}^n\frac{m_s}{t_i-z^0_s}=0,
\qquad i=1,\dots,k,
\eean
on $t=(t_1,\dots,t_k)\in\C^k$.  If $(t_1^0,\dots,t_k^0,z^0_1,\dots,z_n^0)\in\C_p^{k+n}$ has distinct coordinates, denote
\bean
\label{la-i}
\la_s(t^0,z^0)  = \sum_{l\ne s} \frac {m_{s}m_l/2}{z^0_s-z^0_l}
-\sum_{i=1}^k \frac{m_s}{z^0_s-t^0_i},
\qquad s=1,\dots,n.
\eean

For any function or differential form $F(t_1, \dots , t_k)$, denote
\bea
\Sym_t  [F(t_1, \dots , t_k)]  = \sum_{\sigma \in S_k}\!
F(t_{\sigma_1}, \dots , t_{\sigma_k}) ,
\ \ 
\Ant_t  [F(t_1, \dots , t_k)]  =  \sum_{\sigma \in S_k}\!
(-1)^{|\sigma|}F(t_{\sigma_1}, \dots , t_{\sigma_k}) .
\eea
For $J=(j_1,\dots,j_n)\in \Ik$ define the {\it weight function}
\bean
\label{WJ}
W_J(t,z)  = \frac{1}{j_1! \dots j_n!} 
                            \Sym_t \left[ \prod_{s=1}^{n}
                            \prod_{i=1}^{j_s}
                            \frac{1}{t_{ j_1 + \dots + j_{s-1}+i} - z_s}
                           \right]  .
\eean
For example,
\bea
&&
W_{(1,0,\dots,0)} =  \frac{1}{t_1 - z_1} ,
\qquad
W_{(2,0,\dots,0)}  =  \frac{1}{t_1 - z_1}  \frac{1}{t_2 - z_1} ,
\\
&&
\phantom{aaa}
W_{(1,1,0,\dots,0)} =   \frac{1}{t_1-z_1} \frac{1}{t_2-z_2}
+ \frac{1}{t_2-z_1}  \frac{1}{t_1-z_2}  .
\eea
The function
\bean
\label{vW}
W_{k,n,m}(t,z)=\sum_{J\in \Ik}W_J(t,z) f_Jv_m
\eean
is  the  $L^{\otimes m}[ |m| -2k]$-valued {\it vector weight function}.

\begin{thm} 
[\cite{RV,B}, cf. \cite{SV1}]
\label{thm BA}
If $(t^0,z^0)=(t_1^0,\dots,t_k^0,z^0_1,\dots,z_n^0)$ is a solution of the Bethe ansatz equations
\Ref{BAE}, then
the vector $W_{k,n,m}(t^0,z^0)$ lies in $\Sing L^{\ox m}[|m|-2k]$ and is an eigenvector
of the Gaudin Hamiltonians, moreover,
\bean
\label{eig}
H_i(z^0).W_{k,n,m}(t^0,z^0) = \la_i(t^0,z^0) W_{k,n,m}(t^0,z^0),\qquad i=1,\dots, n.
\eean

\end{thm}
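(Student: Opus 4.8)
The weight assertion is immediate: every basis vector $f_Jv_m$ with $J\in\Ik$ has $h$-weight $|m|-2k$, so any linear combination, in particular $W_{k,n,m}(t^0,z^0)$, lies in $L^{\ox m}[|m|-2k]$. The substance is that this vector is singular and is an eigenvector, and for both I would recast the weight function through the creation operators of the algebraic Bethe ansatz. Writing $e^{(s)},f^{(s)},h^{(s)}$ for the copies of $e,f,h$ acting in the $s$-th tensor factor, set
\be
f(u)=\sum_{s=1}^n\frac{f^{(s)}}{u-z^0_s},\qquad
h(u)=\sum_{s=1}^n\frac{h^{(s)}}{u-z^0_s},\qquad
\mu(u)=\sum_{s=1}^n\frac{m_s}{u-z^0_s}.
\ee
A direct expansion, matching the factor $1/(j_1!\dots j_n!)$ in \Ref{WJ} against the repetitions produced by $\Sym_t$, shows that $W_{k,n,m}(t,z^0)=f(t_1)\cdots f(t_k)\,v_m$, where $v_m=v_{m_1}\ox\dots\ox v_{m_n}$ is the highest weight vector. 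I would then record the relations obtained from $[e^{(s)},f^{(l)}]=\dl_{sl}h^{(s)}$ and $[h^{(s)},f^{(l)}]=-2\dl_{sl}f^{(s)}$ together with partial fractions, namely $[e,f(t_i)]=h(t_i)$ and $[h(u),f(v)]=-2\bigl(f(v)-f(u)\bigr)/(u-v)$, along with $e\,v_m=0$ and $h(u)v_m=\mu(u)v_m$.

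For the singular vector claim I would compute $e\,W_{k,n,m}(t^0,z^0)$ by commuting the total raising operator $e=\sum_se^{(s)}$ to the right through $f(t_1)\cdots f(t_k)$ until it annihilates $v_m$. Each commutator inserts one factor $h(t_i)$, which I then move further right past the remaining $f(t_j)$; the terms in which $h(t_i)$ is replaced by $f(t_i)$ recombine under the symmetry of the product, and the net result is
\be
e\,W_{k,n,m}(t^0,z^0)=-\sum_{i=1}^k\Bigl(\sum_{j\ne i}\frac2{t^0_i-t^0_j}-\mu(t^0_i)\Bigr)\,\prod_{j\ne i}f(t^0_j)\,v_m .
\ee
The coefficient of the $i$-th term is exactly the left-hand side of \Ref{BAE}, so it vanishes for every $i$ on a solution, giving $e\,W_{k,n,m}(t^0,z^0)=0$; hence the vector lies in $\Sing L^{\ox m}[|m|-2k]$.

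For the eigenvalue claim I would pass to the quadratic generating function
\be
\tau(u)=e(u)f(u)+f(u)e(u)+\tfrac12 h(u)^2,\qquad e(u)=\sum_{s=1}^n\frac{e^{(s)}}{u-z^0_s}.
\ee
Expanding $\tau(u)$ and using partial fractions identifies its principal parts: the double pole at $z^0_s$ has coefficient the scalar Casimir eigenvalue $m_s(m_s+2)/2$ on $L_{m_s}$, while $\Res_{u=z^0_s}\tau(u)=2H_s(z^0)$. Thus it suffices to show that on a solution of \Ref{BAE} one has $\tau(u)\,W_{k,n,m}(t^0,z^0)=E(u)\,W_{k,n,m}(t^0,z^0)$ for the scalar function $E(u)=\tfrac12 y(u)^2-y'(u)$ with $y(u)=\mu(u)-2\sum_{i=1}^k1/(u-t^0_i)$; taking residues at $u=z^0_s$ then yields \Ref{eig} after the short partial-fraction check that $\tfrac12\Res_{u=z^0_s}E(u)=\la_s(t^0,z^0)$ in the sense of \Ref{la-i}.

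I expect the identity $\tau(u)W=E(u)W$ to be the main obstacle. Establishing it requires commuting $e(u)$, $f(u)$ and $h(u)$ through the string of creation operators and organizing the output into a wanted part proportional to $W$, which produces $E(u)$, and an unwanted part supported at the points $u=t^0_i$; the unwanted coefficients are again proportional to the Bethe expressions $\sum_{j\ne i}2/(t^0_i-t^0_j)-\mu(t^0_i)$ and so vanish on-shell. Since $\tau(u)$ is invariant under the diagonal $\slt$-action by \Ref{h invar}, I may carry out this bookkeeping on the singular vector already produced, which keeps the combinatorics manageable but still constitutes the bulk of the work.
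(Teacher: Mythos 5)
Your proposal is correct, and it takes a genuinely different route from the paper. The paper deduces Theorem \ref{thm BA} by specializing two algebraic identities for logarithmic differential forms quoted from \cite{SV1} (Theorem \ref{ss2}): at a solution of \Ref{BAE} the form $\al'$ vanishes, so identity \Ref{id1} restricted to $\{z^0\}\times\C^k$ gives the singularity relations \Ref{Rel}, while $\al|_{(t^0,z^0)}=\sum_s\la_s(t^0,z^0)\,dz_s$ turns identity \Ref{id2} into the eigenvalue equations \Ref{eig}. You instead run the classical algebraic Bethe ansatz of Gaudin and Babujian \cite{G,B}: your identification $W_{k,n,m}(t,z^0)=f(t_1)\cdots f(t_k)\,v_m$ is correct (the factor $1/(j_1!\cdots j_n!)$ exactly cancels the repetitions in $\Sym_t$), the commutation relations and the off-shell formula for $e\,W$ are the standard ones and are stated with the right signs, and your generating function $\tau(u)$ indeed has residues $\Res_{u=z^0_s}\tau(u)=2H_s(z^0)$, Casimir double poles $m_s(m_s+2)/2$, and $\tfrac12\Res_{u=z^0_s}E(u)=\la_s(t^0,z^0)$, so the residue argument closes correctly; note also that on-shell $E(u)$ is regular at the $t^0_i$, as it must be since $\tau(u)$ has poles only at the $z^0_s$. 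What each approach buys: the paper outsources all combinatorics to \cite{SV1}, and — crucially for this paper — those identities hold over $\Z[\tfrac12]$, which is exactly what makes the $\FF_p$-version (Theorem \ref{thm BAp}) follow by projection; your route is self-contained modulo the commutator computation establishing $\tau(u)W=E(u)W+\sum_i\frac{c_i}{u-t^0_i}(\cdots)$ with $c_i$ the left-hand sides of \Ref{BAE}, which you rightly flag as the bulk of the work — it is a standard but lengthy induction, and since it too involves only coefficients in $\Z[\tfrac12]$ and differences of the distinct coordinates, it would serve equally well for the mod-$p$ statement. If you write it up, the one step to make fully explicit is that recombination of the $h(t_i)\to f(t_i)$ exchange terms across the symmetric product, both in the computation of $e\,W$ and inside the unwanted terms of $\tau(u)W$; everything else is bookkeeping you have already set up correctly.
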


The eigenvector $W_{k,n,m}(t^0,z^0)$ is called the {\it Bethe eigenvector}. On the Bethe 
eigenvectors see, for example,
\cite{SchV, MV1,MV2,V1,V2,V3}.

The fact that  $W_{k,n,m}(t^0,z^0)$ in Theorem \ref{thm BA} lies
in $ \Sing L^{\otimes m} [ |m|-2k ]$ may be reformulated as follows.
For any $J\in \mc I_{k-1}$,  we have
\bean
\label{Rel}
\sum_{s=1}^{n} (j_s + 1) (m_s - j_s) W_{J+ {\bf 1}_s}(t^0,z^0)   =  0 ,
\eean
where we set $ W_{J+ {\bf 1}_s} (t^0,z^0)=0$ if  $J+ {\bf 1}_s \notin \Ik$.

\subsection{Proof of Theorem \ref{thm BA}}
We sketch the proof following \cite{SV1}. The intermediate statements in this proof will be used later  when constructing eigenvectors of the Bethe algebra  over $\FF_p$.  The proof is based on 
the following cohomological relations.

Given $k, n \in \Z_{>0}$ and a multi-index $J = (j_1, \dots , j_n)$ with $|J| \leq k$,
introduce a differential form
\bea
&&
\eta_J  =  \frac{1}{j_1 ! \cdots j_n !}
\operatorname{Ant}_t
\Big[
\frac{d(t_1 - z_1)}{t_1 -z_1} \wedge
\dots
\wedge \frac{d(t_{j_1} - z_1)}{t_{j_1} - z_1} \wedge
\frac{d(t_{j_1+1} - z_2)}{t_{j_1+1} - z_2} \wedge \dots
\\
&&
\phantom{aaaaasssaaaaaaaa}
 \wedge 
\frac{d(t_{j_1+ \dots + j_{n-1}+1} - z_n)}{t_{j_1+ \dots + j_{n-1}+1} - z_n}
\wedge \dots \wedge
\frac{d(t_{j_1+ \dots + j_{n}} - z_n)}{t_{j_1+ \dots + j_{n}} - z_n}
\Big]  ,
\eea
which is  a logarithmic differential form on $\C^n\times \C^k$
with coordinates $z,t$.
If $|J| = k$, then for any $z^0\in \C^n$ we have on $\{z^0\} \times \C^k$ the identity
\bean
\label{ew}
\eta_J\big|_{\{z^0\}\times\C^k}  = W_J(t, z^0) dt_1  \wedge  \dots \wedge dt_k .
\eean
\begin{exmp}
For $k = n = 2$ we have
\bea
\eta_{(2,0)} \
& =&
\ \frac{d(t_1 - z_1)}{t_1 -z_1} \wedge \frac{d(t_2 - z_1)}{t_2 -z_1} ,
\\
\eta_{(1,1)} \
& = &
\ \frac{d(t_1 - z_1)}{t_1 -z_1} \wedge \frac{d(t_2 - z_2)}{t_2 -z_2} -
\frac{d(t_2 - z_1)}{t_2 -z_1} \wedge \frac{d(t_1 - z_2)}{t_1 -z_2} .
\eea
\end{exmp}

Introduce the logarithmic differential 1-forms
\bea
\alpha 
& =  &
\sum_{1\leq i<j\leq n}
 \frac{m_i m_j}{2}
 \frac{d (z_i - z_j)}{z_i -z_j}  + 
\sum_{1\leq i<j\leq k} 2
\frac{d (t_i - t_j)}{t_i -t_j} -
\sum_{s=1}^{ n} \sum_{i=1}^k m_s
 \frac{d (t_i - z_s)}{t_i -z_s} ,
\\
\alpha'
& = & 
\sum_{1\leq i<j\leq k} 2
\frac{d (t_i - t_j)}{t_i -t_j} -
\sum_{s=1}^{ n} \sum_{i=1}^k m_s
 \frac{d (t_i - z_s)}{t_i -z_s} .
\eea
We shall use the following algebraic identities for logarithmic differential forms.

\begin{thm} [\cite{SV1}]
\label{ss2}
We have
\bean
\label{id1}
\alpha' \wedge \eta_J
= \sum_{s=1}^n (j_s +1) (m_s - j_s) 
 \eta_{J + {\bf 1}_s},  
\eean
for any $J$ with $|J|=k-1$, and
\bean
\label{id2}
\alpha \wedge   \sum_{J\in\Ik}  \eta_J f_Jv_m
    = \sum_{i<j} \Omega^{(i,j)} 
    \frac{d(z_i-z_j)}{z_i-z_j}  \wedge 
   \sum_{|J|=k}  \eta_J  f_Jv_m .
\eean
\end{thm}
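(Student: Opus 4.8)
The plan is to treat both identities as identities in the exterior algebra of rational logarithmic $1$-forms on $\C^n\times\C^k$, driven by a single engine: the Arnold three-term relation, which says that whenever $f-g=h$ for affine-linear functions one has $d\log f\wedge d\log g=d\log h\wedge(d\log g-d\log f)$. I will use it in two incarnations, for the triples $(t_a-z_s,\,t_b-z_s,\,t_a-t_b)$ and $(t_l-z_i,\,t_l-z_j,\,z_i-z_j)$. In each case I expand $\eta_J$ as $\Ant_t$ of a wedge of the $1$-forms $d\log(t_l-z_s)$ (note that within a single such wedge no two factors share a $t$-variable, so $\eta_J$ is already in a reduced normal form), wedge with $\alpha'$ resp.\ $\alpha$, and then use Arnold to rewrite every factor $d\log(t_a-t_b)$, and every $d\log(z_i-z_j)$ I wish to expose, until the normal form on the right-hand side is reached. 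The proof then reduces to matching coefficients.

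For \Ref{id1}, both sides are $k$-forms built only from $d\log(t_a-t_b)$ and $d\log(t_a-z_s)$, and both are antisymmetric in $t_1,\dots,t_k$, so it suffices to match the coefficient of one normal-form monomial. Split $\alpha'=\alpha'_{tt}+\alpha'_{tz}$ with $\alpha'_{tt}=\sum_{a<b}2\,d\log(t_a-t_b)$ and $\alpha'_{tz}=-\sum_{s,a}m_s\,d\log(t_a-z_s)$. The part $\alpha'_{tz}\wedge\eta_J$ directly appends a new factor $d\log(t_a-z_s)$ to $\eta_J$; re-antisymmetrizing changes the normalization from $1/j_s!$ to $1/(j_s+1)!$, producing the prefactor $(j_s+1)$ and the $m_s$ (its sign absorbed when the exposed factor is reordered to standard position), i.e.\ the $(j_s+1)m_s$ piece. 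In $\alpha'_{tt}\wedge\eta_J$, every factor $d\log(t_a-t_b)$ for which $t_a,t_b$ carry the \emph{same} $z$-label in $\eta_J$ dies immediately, since Arnold rewrites the two same-label factors as a multiple of $d\log(t_a-t_b)$; the surviving $t$-$t$ factors (the free variable, and cross-label pairs) reorganize under $\Ant_t$ and Arnold into $d\log(\cdot-z_s)$-forms, contributing $-j_s$ for each of the $j_s$ existing $z_s$-labels. Collecting, the coefficient of $\eta_{J+\mathbf 1_s}$ is $(j_s+1)(m_s-j_s)$. The delicate point is the sign bookkeeping under $\Ant_t$ and the cancellation of the residual cross-label $t$-$t$ terms.

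For \Ref{id2}, write $\alpha=\alpha'+\alpha_{zz}$ with $\alpha_{zz}=\sum_{i<j}\tfrac{m_im_j}2\,d\log(z_i-z_j)$, and decompose $\Omega^{(i,j)}=\tfrac12 h^{(i)}h^{(j)}+e^{(i)}f^{(j)}+f^{(i)}e^{(j)}$. The target right-hand side $\sum_{i<j}d\log(z_i-z_j)\wedge\sum_{|J|=k}\eta_J\,(\Omega^{(i,j)}f_Jv_m)$ is a form in which every $z$-$z$ dependence sits in front, so the whole proof is to bring $\alpha\wedge\sum_J\eta_Jf_Jv_m$ into this normal form. The piece $\alpha_{zz}\wedge\sum_J\eta_Jf_Jv_m$ already carries $d\log(z_i-z_j)$ in front with scalar $\tfrac{m_im_j}2$, and together with the diagonal action $\tfrac12 h^{(i)}h^{(j)}f_Jv_m=\tfrac12(m_i-2j_i)(m_j-2j_j)f_Jv_m$ it must furnish the weight-preserving part. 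The piece $\alpha'\wedge\sum_J\eta_Jf_Jv_m$ carries no $d\log(z_i-z_j)$ a priori, but the relation
\[
d\log(t_l-z_i)\wedge d\log(t_l-z_j)=d\log(z_i-z_j)\wedge\bigl(d\log(t_l-z_j)-d\log(t_l-z_i)\bigr)
\]
manufactures them exactly when a factor $d\log(t_a-z_i)$ from $\alpha'_{tz}$ meets a factor $d\log(t_a-z_j)$ already present in $\eta_J$ (and, after the $t$-$t$ factors of $\alpha'_{tt}$ are reduced as in \Ref{id1}, through chains of such relations). Reattaching the freed $t$-variable from $z_i$ to $z_j$ sends $f_Jv_m\mapsto f_{J-\mathbf 1_i+\mathbf 1_j}v_m$, so these terms are designed to reproduce the off-diagonal matrix elements $j_i(m_i-j_i+1)$ and $j_j(m_j-j_j+1)$ of $e^{(i)}f^{(j)}+f^{(i)}e^{(j)}$.

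The main obstacle is precisely this off-diagonal matching. One must verify that, after exposing all $d\log(z_i-z_j)$ factors and fully Arnold-reducing the residual $t$-$t$ and $t$-$z$ forms, the coefficient of each $d\log(z_i-z_j)\wedge\eta_{J'}$ equals the correct matrix element of $e^{(i)}f^{(j)}+f^{(i)}e^{(j)}$ between $f_Jv_m$ and $f_{J'}v_m$ with the right sign; that the weight-preserving remainder combines with $\alpha_{zz}$ to give $\tfrac12(m_i-2j_i)(m_j-2j_j)$ rather than $\tfrac{m_im_j}2$; and that all terms with no $z$-$z$ factor (and any with two) cancel. Keeping the $\Ant_t$ signs consistent through repeated Arnold substitutions is the bookkeeping I would budget the most effort for. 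Since the statement is quoted from \cite{SV1}, one may alternatively invoke it directly, but the computation sketched here is self-contained.
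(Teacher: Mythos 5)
The paper does not prove these identities at all: its proof is a two-line citation, identifying \Ref{id1} with Theorem~6.16.2 of \cite{SV1} and \Ref{id2} with Theorem~7.5.2$''$ of \cite{SV1}, specialized to $\slt$. Your closing remark --- that one may invoke \cite{SV1} directly --- is therefore exactly the paper's proof; had you led with that, the comparison would end there.

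The ``self-contained'' computation you sketch instead uses the right engine (Arnold relations and reduction of logarithmic forms to a normal form, which is indeed the technique underlying \cite{SV1}), and your individual observations check out: the relation $d\log f\wedge d\log g=d\log h\wedge(d\log g-d\log f)$ for $h=f-g$ is correct, and the remark that a factor $d\log(t_a-t_b)$ annihilates any term in which $t_a,t_b$ carry the same label $z_s$ is a correct consequence of it. But as a proof it has a genuine gap: in both identities the decisive steps are announced rather than carried out. For \Ref{id1} you assert that the surviving cross-label and free-variable $t$-$t$ terms ``reorganize'' to supply the $-(j_s+1)j_s$ contribution --- that is precisely the nontrivial cancellation, and it is nowhere verified (your phrase ``contributing $-j_s$ for each of the $j_s$ existing $z_s$-labels'' does not even parse into the needed arithmetic unambiguously). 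For \Ref{id2} you say the off-diagonal terms ``are designed to reproduce'' the matrix elements of $e^{(i)}f^{(j)}+f^{(i)}e^{(j)}$, and that ``one must verify'' the weight-preserving remainder combines with $\alpha_{zz}$ to give $\tfrac12(m_i-2j_i)(m_j-2j_j)$; these verifications, together with the vanishing of all terms carrying zero or two $z$-$z$ factors, are the entire content of the theorem --- in \cite{SV1} the second identity is a substantial result, not a routine check. A proposal whose hard parts are flagged as ``obstacles'' and ``bookkeeping I would budget the most effort for'' is a plan, not a proof. Either execute the coefficient and sign matching in full, or simply cite \cite{SV1} as the paper does.
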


\begin{proof}  Identity \Ref{id1} is Theorem 6.16.2 in \cite{SV1} for the case of the Lie algebra $\slt$.
Identity \Ref{id2} is Theorem 7.5.2'' in \cite{SV1} for the case of the Lie algebra $\slt$.
\end{proof}

If $(t^0,z^0)$ is a solution
of the Bethe ansatz equations,
 then $\al'|_{(t^0,z^0)}=0$ and formulas \Ref{id1}, 
\Ref{ew}  give \Ref{Rel}. Similarly, if 
$(t^0,z^0)$ is a solution
of the Bethe ansatz equations,
  then $\al|_{(t^0,z^0)}=\sum_{s=1}^n \la_s(t^0,z^0)dz_s$ 
and formulas \Ref{id2} and \Ref{ew} give \Ref{eig}. Theorem \ref{thm BA} is proved.

\subsection
{Bethe ansatz on 
 $\Sing L^{\otimes m} \big[ |m| - 2k \big]$ over $\FF_p$}
  \label{Sol in F}

Given $k, n \in \Z_{>0}$,  $m = ( m_1, \dots , m_n) \in \Z_{>0}^n$,  
let $p$ be a prime number. Consider the Lie algebra $\slt$
as an algebra  over
the field $\FF_p$ and the $\slt$-modules $L_{m_s}$, $s=1,\dots,n$, over $\FF_p$. 
Let $z^0=(z_1^0,\dots,z_n^0)\in\FF^n_p$ have distinct coordinates.
The Gaudin Hamiltonians  $H_s(z^0)$ of formula \Ref{Ham} define
commuting $\FF_p$-linear operators on
the $\FF_p$-vector space  $L^{\ox m}=\ox_{s=1}^nL_{m_s}$.
By formula \Ref{h invar} the Gaudin Hamiltonians preserve the $\FF_p$-subspaces
$\Sing L^{\ox m}[|m|-2k]$ and we may study eigenvectors
of the Gaudin Hamiltonians on a subspace  $\Sing L^{\ox m}[|m|-2k]$.

Consider the  {\it system of Bethe ansatz equations}
\bean
\label{BAEp}
\sum_{j\ne i}\frac 2{t_i-t_j} -\sum_{s=1}^n\frac{m_s}{t_i-z_s^0}=0,
\qquad i=1,\dots,k,
\eean
as a system of equations on $t=(t_1,\dots,t_k)\in \FF_p^k$.
If $(t_1^0,\dots,t_k^0,z^0_1,\dots,z_n^0)\in\FF_p^{k+n}$ has distinct coordinates, denote
\bean
\label{lap}
\la_s(t^0,z^0) =\sum_{l\ne s} \frac {m_{s}m_l/2}{z^0_s-z^0_l}
-\sum_{i=1}^k \frac{m_s}{z_s^0-t_i^0}\ \in \FF_p,
\qquad s=1,\dots,n.
\eean

\begin{thm} 
\label{thm BAp}
Let $p$ be a prime number and $p>|m|$. Let  $t^0\in \FF_p^{k}$ be a solution of the Bethe ansatz
equations  \Ref{BAEp}. Then the vector
 $W_{k,n,m}(t^0,z^0)$ is well-defined and lies in the subspace $\Sing L^{\ox m}[|m|-2k]$, that is,
 the equations
\bean
\label{Relp}
\sum_{s=1}^{n} (j_s + 1) (m_s - j_s) W_{J+ {\bf 1}_s}(t^0,z^0)   =  0 ,
\eean
hold, also the vector  $W_{k,n,m}(t^0,z^0)$  satisfies the equations
\bean
\label{eig}
H_s(z^0).W_{k,n,m}(t^0,z^0) = \la_s(t^0,z^0) W_{k,n,m}(t^0,z^0),
\qquad s=1,\dots, n.
\eean

\end{thm}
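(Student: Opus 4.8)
The plan is to transplant the complex-field proof of Theorem~\ref{thm BA} to $\FF_p$ almost verbatim; the only genuinely new point is to check that every object used there, and the two form identities it rests on, survive reduction modulo $p$, and this is precisely what the hypothesis $p>|m|$ provides. I first dispose of well-definedness. If $k>|m|$ then $\Ik=\emptyset$, so $W_{k,n,m}=0$ and the assertion is trivial; thus assume $k\le|m|<p$. Because $t^0$ solves \Ref{BAEp}, none of the denominators $t_i^0-t_j^0$ (for $k\ge2$) or $t_i^0-z_s^0$ appearing in those equations can vanish, so the $t_i^0$ are pairwise distinct and distinct from the $z_s^0$; together with the hypothesis that the $z_s^0$ are distinct, the whole tuple $(t^0,z^0)$ has distinct coordinates. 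For every $J\in\Ik$ the entries obey $j_s\le m_s\le|m|<p$, so each factorial $j_1!\cdots j_n!$ is a unit in $\FF_p$, and every difference $t_i^0-z_s^0$ is invertible; hence each $W_J(t^0,z^0)$, the vector $W_{k,n,m}(t^0,z^0)\in L^{\ox m}[|m|-2k]$, and each $\la_s(t^0,z^0)$ are well-defined over $\FF_p$. (A factor $\tfrac12$ enters only when $n\ge2$, in which case $p>|m|\ge2$ forces $p$ odd, so $2$ is invertible.)

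The crux is to argue that identities \Ref{id1} and \Ref{id2} of Theorem~\ref{ss2} hold over $\FF_p$. They are algebraic identities of logarithmic differential forms whose coefficients are the integers $(j_s+1)(m_s-j_s)$, $2$, $m_s$, and $\tfrac{m_im_j}2$, together with reciprocal factorials $1/(j_1!\cdots j_n!)$; every arithmetic denominator occurring divides $|m|!$. Since $p>|m|$, all these coefficients lie in the localization $\Z_{(p)}$, while the differences $t_i-t_j$, $t_i-z_s$, $z_i-z_j$ reduce to nonzero linear forms. Thus the difference of the two sides of each identity is zero in the $\Z_{(p)}$-algebra of logarithmic forms, and applying the reduction homomorphism $\Z_{(p)}\to\FF_p$ keeps it zero, so both identities descend to $\FF_p$. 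I expect this descent to be the main point to write with care: one must confirm that nothing in clearing the $t_i-t_j$, $t_i-z_s$, $z_i-z_j$ denominators secretly produces a factor of $p$, so that the characteristic-zero identity genuinely reduces. With $p>|m|$ this holds automatically, which is the reason for that hypothesis.

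With the identities available over $\FF_p$, I run the argument of the proof of Theorem~\ref{thm BA}. The coefficient of $dt_i$ in $\alpha'$ equals the left-hand side of the $i$-th equation \Ref{BAEp}, so the restriction of $\alpha'$ to $\{z^0\}\times\FF_p^k$ vanishes at $t^0$. Restricting \Ref{id1} to this slice and using \Ref{ew}, the left-hand side $\alpha'\wedge\eta_J$ vanishes at $t^0$, while the right-hand side is $\sum_s(j_s+1)(m_s-j_s)W_{J+{\bf 1}_s}(t^0,z^0)\,dt_1\wedge\dots\wedge dt_k$; comparing coefficients of $dt_1\wedge\dots\wedge dt_k$ gives \Ref{Relp} for each $J$ with $|J|=k-1$, which means $W_{k,n,m}(t^0,z^0)\in\Sing L^{\ox m}[|m|-2k]$.

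For the eigenvalue equations I compute the $dz_s$- and $dt_i$-coefficients of $\alpha$: the $dt_i$-coefficient is again the $i$-th equation \Ref{BAEp}, hence vanishes at $(t^0,z^0)$, while the $dz_s$-coefficient equals $\la_s(t^0,z^0)$ by \Ref{lap}, so $\alpha|_{(t^0,z^0)}=\sum_s\la_s(t^0,z^0)\,dz_s$ is pure in $dz$. Substituting this into \Ref{id2} and extracting the coefficient of $dz_s\wedge dt_1\wedge\dots\wedge dt_k$ at $(t^0,z^0)$, only the pure-$dt$ part of $\sum_{|J|=k}\eta_J f_Jv_m$, namely $W_{k,n,m}\,dt_1\wedge\dots\wedge dt_k$ by \Ref{ew}, contributes; the left-hand coefficient is then $\la_s(t^0,z^0)\,W_{k,n,m}(t^0,z^0)$, while on the right the $dz_s$-coefficient of $\sum_{i<j}\Omega^{(i,j)}\frac{d(z_i-z_j)}{z_i-z_j}$ is $\sum_{l\ne s}\frac{\Omega^{(s,l)}}{z_s^0-z_l^0}=H_s(z^0)$ by \Ref{Ham}, giving $H_s(z^0).W_{k,n,m}(t^0,z^0)$. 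Equating the two yields \Ref{eig}, completing the proof.
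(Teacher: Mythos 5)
Your proof is correct and takes essentially the same route as the paper: the paper's own proof is a one-liner observing that the identities \Ref{id1} and \Ref{id2} hold over (half-)integers and can be projected to $\FF_p$, after which the argument of Theorem \ref{thm BA} repeats verbatim --- exactly the reduction-mod-$p$-and-rerun strategy you carry out. Your added checks (invertibility of the factorials and of $2$ under $p>|m|$, distinctness of the coordinates of $(t^0,z^0)$, the trivial case $k>|m|$) are correct elaborations of what the paper leaves implicit.
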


\begin{proof} 
The proof of Theorem \ref{thm BAp} is the same as the proof of Theorem \ref{thm BA} since identities
\Ref{id1} and \Ref{id2} hold over half integers and can be projected to $\FF_p$.
\end{proof}

\section{Two-dimensional spaces of polynomials}
\label{TD}
\subsection{Two-dimensional spaces of polynomials over $\C$}
\label{2dC}
For a function $g(x)$ denote $g' = \frac{dg}{dx}$.  For functions $g(x), h(x)$ define the {\it Wronskian}
\bea
\Wr(g(x),h(x))= g'(x)h(x)-g(x)h'(x).
\eea

\begin{thm}
 [\cite{SchV}, cf. \cite{MV2}]
\label{thm 2dC}
Let $k\in\Z_{>0}$, $m = ( m_1, \dots , m_n) \in \Z_{>0}^n$. 
Let $(t^0,z^0)\in\C^{k+n}$ have distinct coordinates.
Denote 
 \bean
 \label{yT}
 y(x)=\prod_{i=1}^k(x-t^0_i),
 \qquad
 T(x)=\prod_{s=1}^n(x-z^0_s)^{m_s}.
 \eean
We have the following two statements.

\begin{enumerate}
\item[(i)]
If $(t^0,z^0)$  
is a solution of the Bethe ansatz equations \Ref{BAE},
 then $k\leq |m|+1$, $2k\ne |m|+1$, and 
there exists a polynomial $\tilde y(x) \in \C[x]$ of degree $|m|+1-k$ such that
\bean
\label{WE}
\Wr(\tilde y(x),y(x))= T(x).
\eean

\item[(ii)]

If there exists a polynomial $\tilde y(x)$ satisfying equation
\Ref{WE}, then  $k\leq |m|+1$, $2k\ne |m|+1$  and
$(t^0,z^0)$ is a solution of the Bethe ansatz equations \Ref{BAE}.
\end{enumerate}
\end{thm}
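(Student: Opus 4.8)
The plan is to reduce the Wronskian equation \Ref{WE} to a statement about the residues of a single rational function and then to match that statement with the Bethe ansatz equations \Ref{BAE} by a direct residue computation. The starting point is the elementary identity
$$\Wr(\tilde y,y)=y^2\left(\frac{\tilde y}{y}\right)',$$
valid wherever $y\ne 0$. Thus \Ref{WE} is equivalent to $\left(\tilde y/y\right)'=T/y^2$, i.e.\ to the rational function $T/y^2$ being the derivative of $\tilde y/y$. Since the coordinates of $(t^0,z^0)$ are distinct, $T/y^2$ has double poles exactly at the points $t^0_i$ and is regular elsewhere in the finite plane.

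For part (ii) I would argue as follows. If a polynomial $\tilde y$ solves \Ref{WE}, then $T/y^2=(\tilde y/y)'$ is the derivative of a rational function, so all its residues vanish; in particular $\Res_{x=t^0_i}T/y^2=0$ for each $i$. Writing $y(x)=(x-t^0_i)\,y_i(x)$ with $y_i(t^0_i)\ne 0$, the residue equals $\frac{d}{dx}\big[T/y_i^2\big]_{x=t^0_i}$, and a short logarithmic-derivative computation gives
$$\Res_{x=t^0_i}\frac{T}{y^2}=\frac{T(t^0_i)}{y_i(t^0_i)^2}\left(\frac{T'(t^0_i)}{T(t^0_i)}-2\,\frac{y_i'(t^0_i)}{y_i(t^0_i)}\right).$$
Because $T'/T=\sum_s m_s/(x-z^0_s)$ and $y_i'/y_i=\sum_{j\ne i}1/(x-t^0_j)$, and the prefactor is nonzero, the vanishing of this residue is exactly the $i$-th equation in \Ref{BAE}. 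For the converse in part (i) I would run the same computation backwards: a solution of \Ref{BAE} makes every residue $\Res_{t^0_i}T/y^2$ vanish, so the principal part of $T/y^2$ at each $t^0_i$ has only a second-order term, and $T/y^2=P(x)+\sum_i A_i/(x-t^0_i)^2$ for a polynomial $P$. Any primitive then has the form $G=Q-\sum_i A_i/(x-t^0_i)$ with $Q'=P$, and setting $\tilde y:=y\,G=yQ-\sum_i A_i\,y_i$ produces a genuine polynomial (each $y/(x-t^0_i)=y_i$ is a polynomial) with $\Wr(\tilde y,y)=y^2G'=T$.

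It remains to extract the degree constraints, which I expect to be the fiddly part. Comparing leading coefficients in $\Wr(\tilde y,y)=T$: if $\deg\tilde y=d\ne k$, then $\Wr(\tilde y,y)$ has degree $d+k-1$ with leading coefficient $(d-k)\,\mathrm{lead}(\tilde y)$, so matching $\deg T=|m|$ forces $d=|m|+1-k$ and $d-k=|m|+1-2k\ne 0$. Since $\tilde y\ne 0$ gives $d\ge 0$, this yields $k\le |m|+1$ and $2k\ne |m|+1$. Conversely, if $2k=|m|+1$ then a polynomial solution of degree $\ne k$ would have to have degree $|m|+1-k=k$, a contradiction, while a solution of degree exactly $k$ makes $\deg\Wr\le 2k-2<|m|$; hence no solution exists, which is precisely why this case is excluded. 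Finally, since adding a multiple of $y$ does not change the Wronskian, I can normalize the solution built in part (i) (by choosing the integration constant in $Q$) so that it has degree exactly $|m|+1-k$, completing both directions. The main obstacle is thus not conceptual but organizational: arranging the partial-fraction/primitive step together with the leading-coefficient count so that the borderline case $2k=|m|+1$ is ruled out cleanly.
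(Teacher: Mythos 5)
Your proposal is correct and takes essentially the same route as the paper's proof: both reduce \Ref{WE} to $(\tilde y/y)' = T/y^2$, identify the vanishing of the simple-pole (residue) terms of $T/y^2$ at the points $t^0_i$ with the Bethe ansatz equations \Ref{BAE}, construct $\tilde y$ by integrating the resulting partial-fraction expansion, and derive $k\leq |m|+1$, $2k\ne |m|+1$ by a leading-term comparison after normalizing $\deg\tilde y\ne\deg y$. The differences are only cosmetic: you phrase the key step as ``residues of a derivative vanish'' where the paper computes the explicit partial-fraction coefficients $a_{i,1}$, and you compare leading coefficients directly where the paper invokes the identity $\Wr(x^\al,x^\bt)=(\al-\bt)x^{\al+\bt-1}$.
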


\begin{proof} We will use the proof below in the proof of the $p$-version of Theorem \ref{thm 2dC}.
 Equation \Ref{WE} is a first order differential
equation with respect to $\tilde y(x)$. Then
\bean
\label{dern}
\left( \frac{\tilde y(x)}{y(x)} \right)' = \frac{T(x)}{y(x)^2}
\eean
and
\bean
\label{solu}
\tilde y(x) = y(x) \int\frac{T(x)}{y(x)^2}dx=y(x) \int\frac{T(x)}{\prod_{i=1}^k(x-t^0_i)^2}dx.
\eean
We have the unique presentation $T(x)= Q(x)\prod_{i=1}^k(x-t^0_i)^2 + R(x)$ with $P(x),Q(x)\in \C[x]$
such that $Q(x)=0$ if $2k>|m|$ and $Q(x) = a_{|m|-2k}x^{|m|-2k} + \dots +a_0$ is of degree $|m|-2k$ otherwise;
$\deg R(x)< 2k$. We have the unique presentation
\bean
\label{sf}
\frac{R(x)}{\prod_{i=1}^k(x-t^0_i)^2} = \sum_{i=1}^k \left(
\frac{a_{i,2}}{(x-t^0_i)^2}  + \frac{a_{i,1}}{x-t^0_i}\right),
\eean
where
\bean
\label{CO}
a_{i,2} = \frac{T(x)}{\prod_{j\ne i}^k(x-t^0_j)^2} \biggr\rvert_{x=t_i^0},
\qquad
a_{i,1} = \frac{d}{dx}\left(\frac{T(x)}{\prod_{j\ne i}^k(x-t^0_j)^2}\right) \biggr\rvert_{x=t_i^0}.
\eean
We have
\bea
\frac{d}{dx}\left(\frac{T(x)}{\prod_{j\ne i}^k(x-t^0_j)^2}\right)\biggr\rvert_{x=t_i^0}
=
\left(\sum_{s=1}^n\frac{m_s}{t_i^0-z_s^0}-\sum_{j\ne i}\frac 2{t^0_i-t_j^0}
\right)\frac{T(t^0_i)}{\prod_{j\ne i}^k(t^0_i-t^0_j)^2}.
\eea
Since $(t^0,z^0)$ has distinct coordinates we conclude that $a_{i,1}=0$ for $i=1,\dots,k$,
if and only if $(t^0,z^0)$  is a solution of \Ref{BAE}.

Let $(t^0,z^0)$ be  a solution of  \Ref{BAE}. By formula \Ref{solu} we have
\bean
\label{tif}
\phantom{aaa}
&&
\tilde y(x) 
=
 \prod_{i=1}^k(x-t^0_i)\left(c
-\sum_{i=1}^k  \frac{a_{i,2}}{x-t^0_i} \right),\quad \on{if}\  2k>|m|,
\\
\notag
&&
\tilde y(x) 
=
 \prod_{i=1}^k(x-t^0_i)\left(\frac{a_{|m|-2k}}{|m|-2k+1}x^{|m|-2k+1} + \dots +a_0x +c
-\sum_{i=1}^k  \frac{a_{i,2}}{x-t^0_i} \right),
\eean
if  $2k\leq |m|$, where $c\in\C$ is an arbitrary number.  
In each of the two cases we may choose $c$ so that $\deg \tilde y(x)\ne \deg y(x)$. Using the identity
\bean
\label{wi}
\Wr(x^\al,x^\bt)=(\al-\bt)x^{\la+\bt-1}
\eean
we obtain in this case that
\bean
\label{de}
\deg \tilde y(x) + \deg y(x) = |m|+1.
\eean
Hence $k\leq |m|+1$ and $k\ne |m|+1-k$. The first part of the theorem is proved.

Let there exist a polynomial $\tilde y(x)$ satisfying equation
\Ref{WE}. Adding to $\tilde y(x)$ the polynomial $y(x)$ with a suitable coefficient 
if necessary we may assume that 
$\deg \tilde y(x)\ne \deg y(x)$. Then \Ref{de} implies $k\leq |m|+1$ and $k\ne |m|+1-k$.

By formula \Ref{dern} we have
\bean
\label{der}
\phantom{aaa}
\Big(\frac{\tilde y(x)}{y(x)}\Big)' = a_{|m|-2k}x^{|m|-2k} + \dots +a_0 +
\sum_{i=1}^k \Big(
\frac{a_{i,2}}{(x-t^0_i)^2}  + \frac{a_{i,1}}{x-t^0_i}\Big) \ \on{if} \ 2k\leq |m|
\eean
and 
\bean
\label{der1}
\Big(\frac{\tilde y(x)}{y(x)}\Big)' = 
\sum_{i=1}^k \Big(
\frac{a_{i,2}}{(x-t^0_i)^2}  + \frac{a_{i,1}}{x-t^0_i}\Big) \ \on{if} \ 2k> |m|.
\eean
The function $\frac{\tilde y(x)}{y(x)}$ has a unique decomposition into
the sum of a polynomial and simple fractions.  The term by term derivative of that decomposition equals
the right-hand side of \Ref{der} or \Ref{der1}. Hence  all of coefficients $a_{i,1}$ must be zero. 
Hence the roots of $y(x)$ satisfy the Bethe ansatz equations.
\end{proof}

\begin{rem}This construction assigns 
to a solution $(t^0,z^0)$ of the Bethe ansatz equations
the two-dimensional
subspace $\langle \tilde y(x), y(x)\rangle$ of the space of polynomials $\C[x]$ 
such that $\deg y(x)=k, \deg\tilde y(x)=|m|-k+1$, $ \Wr(y(x),\tilde y(x))=T(x)$.
That subspace is a point of the Grassmannian of two-dimensional subspaces of
$\C[x]$.
\end{rem}

\subsection{Two-dimensional spaces of polynomials over $\FF_p$}
\label{}

\begin{thm}

\label{thm 2dF}
Let $k\in\Z_{>0}$,  $m = ( m_1, \dots , m_n) \in \Z_{>0}^n$.   Let $p>|m|+1$, $p>n+k$. 
Let $(t^0,z^0)\in\FF_p^{k+n}$ have distinct coordinates.
Denote 
 \bean
 \label{yTp}
 y(x)=\prod_{i=1}^k(x-t^0_i),
 \qquad
 T(x)=\prod_{s=1}^n(x-z^0_s)^{m_s} \ \in \FF_p[x].
 \eean
We have the following two statements.

\begin{enumerate}
\item[(i)]
If $(t^0,z^0)$  
is a solution of the Bethe ansatz equations \Ref{BAEp},
 then $k\leq |m|+1$, $2k\ne |m|+1$, and 
there exists a polynomial $\tilde y(x) \in \FF_p[x]$ of degree $|m|+1-k$ such that
\bean
\label{WEp}
\Wr(\tilde y(x),y(x))= T(x).
\eean

\item[(ii)]

If there exists a polynomial $\tilde y(x)\in\FF_p[x]$ satisfying equation
\Ref{WEp}, then  $k\leq |m|+1$, $2k\ne |m|+1$  and
$(t^0,z^0)$ is a solution of the Bethe ansatz equations \Ref{BAEp}.
\end{enumerate}
\end{thm}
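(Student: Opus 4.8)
The plan is to mirror the proof of Theorem \ref{thm 2dC} almost verbatim, since the characteristic-zero argument is essentially a partial-fraction computation together with the differential identity \Ref{dern}. The only point requiring care is that the passage from $\C$ to $\FF_p$ involves dividing by various integers, so I must track exactly which denominators appear and verify they are nonzero in $\FF_p$ under the stated hypotheses $p>|m|+1$ and $p>n+k$. First I would observe that $T(x)\in\FF_p[x]$ has degree $|m|$ and that $y(x)$ has degree $k$ with distinct roots $t^0_1,\dots,t^0_k$; distinctness of the coordinates of $(t^0,z^0)$ guarantees that the $t^0_i$ are pairwise distinct and distinct from the $z^0_s$, so the partial-fraction decomposition \Ref{sf}--\Ref{CO} still makes sense over $\FF_p$ and the residues $a_{i,2},a_{i,1}$ are computed by exactly the same formulas.

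Next I would reproduce the key computation that
\be
\frac{d}{dx}\left(\frac{T(x)}{\prod_{j\ne i}^k(x-t^0_j)^2}\right)\biggr\rvert_{x=t_i^0}
=
\left(\sum_{s=1}^n\frac{m_s}{t_i^0-z_s^0}-\sum_{j\ne i}\frac 2{t^0_i-t_j^0}\right)\frac{T(t^0_i)}{\prod_{j\ne i}^k(t^0_i-t^0_j)^2},
\ee
which is a formal identity of rational functions and hence holds over $\FF_p$ verbatim. Since the coordinates of $(t^0,z^0)$ are distinct, the prefactor $T(t^0_i)/\prod_{j\ne i}(t^0_i-t^0_j)^2$ is a nonzero element of $\FF_p$, so $a_{i,1}=0$ for all $i$ if and only if the bracketed sum vanishes for all $i$, i.e.\ if and only if $(t^0,z^0)$ solves the Bethe ansatz equations \Ref{BAEp}. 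This establishes the logical equivalence underlying both (i) and (ii) without any division by $p$-sensitive quantities.

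For the degree count I would recall the integration formula \Ref{solu}: writing $T(x)=Q(x)\prod_i(x-t^0_i)^2+R(x)$, the antiderivative of $a_{|m|-2k}x^{|m|-2k}+\dots+a_0$ introduces denominators $|m|-2k+1,\dots,1$, and the antiderivative of $a_{i,2}/(x-t^0_i)^2$ introduces no new denominator. This is exactly where the hypothesis $p>|m|+1$ is needed: it guarantees that none of the integers $1,2,\dots,|m|-2k+1\le |m|+1$ vanish modulo $p$, so the formula \Ref{tif} for $\tilde y(x)$ produces a genuine polynomial in $\FF_p[x]$ and, crucially, the leading coefficient $a_{|m|-2k}/(|m|-2k+1)$ is a unit, so no degree drop occurs. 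Then the Wronskian identity \Ref{wi}, $\Wr(x^\al,x^\bt)=(\al-\bt)x^{\al+\bt-1}$, holds formally over $\FF_p$, and as in \Ref{de} I can choose the free constant $c$ so that $\deg\tilde y(x)\ne\deg y(x)$, obtaining $\deg\tilde y+\deg y=|m|+1$; this simultaneously forces $k\le|m|+1$ and $2k\ne|m|+1$. (Here the condition $p>n+k$, or the slightly weaker $p>|m|+1\ge 2k$, ensures $2k\not\equiv|m|+1\pmod p$ is a genuine statement about integers rather than residues, so the strict inequality is meaningful.) For part (ii) the argument runs in reverse exactly as over $\C$: given $\tilde y$ solving \Ref{WEp}, I adjust by a multiple of $y$ to arrange $\deg\tilde y\ne\deg y$, read off the partial-fraction expansion of $(\tilde y/y)'$, invoke uniqueness of that expansion over the field $\FF_p$ to conclude every $a_{i,1}=0$, and hence deduce that the roots of $y$ satisfy \Ref{BAEp}.

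The main obstacle, and the only real content beyond transcription, is the bookkeeping of denominators: I must confirm that the hypothesis $p>|m|+1$ is both necessary and sufficient to make every division in \Ref{tif} and \Ref{wi} legitimate in $\FF_p$, and that uniqueness of the partial-fraction decomposition \Ref{sf}—which over $\C$ is automatic but over a field of positive characteristic must be justified from the distinctness of the poles and the fact that $\FF_p[x]$ is a PID—goes through. I expect all of these to be clean once the role of each hypothesis is isolated, so the proof reduces to the remark that the characteristic-zero computation is entirely rational and specializes, together with the explicit verification that the integer denominators $1,\dots,|m|+1$ and the quantity $|m|+1-2k$ are invertible modulo $p$.
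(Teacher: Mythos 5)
Your overall strategy coincides with the paper's: the paper isolates your partial-fraction claim as a separate statement (Lemma \ref{sfll}, proved there by lifting to $\Z$ and projecting to $\FF_p$), and your part (i) — including the bookkeeping of the denominators $1,\dots,|m|-2k+1$, which is where $p>|m|+1$ enters, and the observation that the leading coefficient is a unit — is exactly the paper's argument. Likewise, your derivation of the Bethe ansatz equations in part (ii) from uniqueness of the partial-fraction expansion of $(\tilde y/y)'$ is sound, since that piece needs no control on $\deg\tilde y$.

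However, there is a genuine gap in part (ii) at the step "adjust by a multiple of $y$ to arrange $\deg\tilde y\ne\deg y$", from which you then deduce \Ref{de}. This is the characteristic-zero move, and it fails over $\FF_p$ for two related reasons. First, $\deg\tilde y\ne\deg y$ is not the right condition: by \Ref{wi} the top coefficient of $\Wr(\tilde y,y)$ is a nonzero multiple of $\deg\tilde y-\deg y$ reduced mod $p$, so the degree formula $\deg\tilde y+\deg y=|m|+1$ requires $\deg\tilde y-\deg y\not\equiv 0\pmod p$, which is strictly stronger than inequality of degrees. Second, a solution of \Ref{WEp} is unique not up to scalar multiples of $y$ but up to $\FF_p[x^p]\cdot y$: for any $c(x)\in\FF_p[x]$ one has $\Wr(\tilde y+c(x^p)y,\,y)=\Wr(\tilde y,y)$ because $(c(x^p))'=0$, and these are the only adjustments preserving \Ref{WEp}. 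Concretely, if $\tilde y_0$ of degree $|m|+1-k$ solves \Ref{WEp}, then $\tilde y=\tilde y_0+x^p y$ also solves it, has degree $k+p$ (as $p>|m|+1$), and already satisfies your condition $\deg\tilde y\ne\deg y$; yet $\deg\tilde y+\deg y=2k+p\ne|m|+1$, so the conclusion you would draw is false for this $\tilde y$, and no scalar multiple of $y$ can lower its degree. The paper's proof handles exactly this point: one adds a suitable $c(x^p)y(x)$, $c(x)\in\FF_p[x]$, to reach $\deg\tilde y-\deg y\not\equiv 0\pmod p$, and the hypothesis $p>n+k$ (in effect $p>k$) excludes the residual case $\deg\tilde y<\deg y$ with difference divisible by $p$, since that would force $\deg\tilde y\le k-p<0$. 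Relatedly, your parenthetical justification of $2k\ne|m|+1$ assumes $|m|+1\ge 2k$, which is part of what is to be proved; the correct role of $p>n+k$ is the one just described.
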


\begin{proof}

\begin{lem}
\label{sfll}
 Let $p$ be a prime number.
Let $d_1,\dots,d_k\in\Z_{>0}$ with $d_i\leq 2$ for all $i$. Let  $t^0_1,\dots,t^0_k\in\FF_p$ 
be distinct and $T(x)\in\FF_p[x]$. Then
there exists a unique presentation
\bean
\label{sfr}
\frac{T(x)}{\prod_{i=1}^k(x-t^0_i)^{d_i}}  = Q(x) + \sum_{i=1}^k\sum_{j=1}^{d_i}\frac{a_{i,j}}{(x-t^0_i)^j},
\eean
where $Q(x)\in\FF_p[x]$ and
\bean
\label{aij}
a_{i,j}  = \frac{d^{j-1}}{dx^{j-1}}\left(\frac{T(x)}{\prod_{l\ne i}^k(x-t^0_l)^{d_l}}\right) \biggr\rvert_{x=t_i^0}.
\eean
\end{lem}
\begin{proof} The uniqueness is clear. Let us show the existence. Lift $t^0_1,\dots,t^0_k$, $T(x)$ to
$t^1_1,\dots,t^1_k\in\Z$, $T^1(x)\in\Z[x]$. We have 
\bean
\label{ppr}
\frac{T^1(x)}{\prod_{i=1}^k(x-t^1_i)^{d_i}}  = Q^1(x) + \sum_{i=1}^k\sum_{j=1}^{d_i}\frac{a^1_{i,j}}{(x-t^1_i)^j},
\eean
where $Q^1(x) \in \Z[x]$ and
\bean
\label{aijd}
a^1_{i,j}  = \frac{d^{j-1}}{dx^{j-1}}\left(\frac{T^1(x)}{\prod_{j\ne i}^k(x-t^1_j)^{d_j}}\right) \biggr\rvert_{x=t_i^0}.
\eean
It is easy to see that for $j=1,2$ and all $i$
the coefficient $a^1_{i,j}$ has a well-defined projection to $\FF_p$. 
By projecting \Ref{ppr} to $\FF_p$ we obtain a presentation of \Ref{sfr}.
\end{proof}

The proof of Theorem \ref{thm 2dF} is based on Lemma \ref{sfll} and is
analogous to the proof of Theorem \ref{thm 2dC}.
If $(t^0,z^0)$ is a solution of \Ref{BAEp}, then
\bea
\left(\frac{\tilde y(x)}{y(x)} \right)' = \frac{T(x)}{\prod_{i=1}^k(x-t^0)^2}=
Q(x) + \sum_{i=1}^k\frac{a_{i,2}}{(x-t^0_i)^2},
\eea
where $a_{i,2}$ are given by \Ref{aij}; $Q(x)\in\FF_p[x]$, $Q(x) = 0$ if $2k>|m|$ and
$Q(x)= a_{|m|-2k}x^{|m|-2k} + \dots +a_0$ is of degree $|m|-2k+1$ if $2k\leq |m|$, see Section \ref{2dC}.

If $2k\leq |m|$, then
\bea
\tilde y(x) 
=
 \prod_{i=1}^k(x-t^0_i)\left(\frac{a_{|m|-2k}}{|m|-2k+1}x^{|m|-2k+1} + \dots +a_0x 
-\sum_{i=1}^k  \frac{a_{i,2}}{x-t^0_i} \right) 
\eea
is a polynomial of degree $|m|-k+1$ satisfying \Ref{WE}. Notice that the polynomial in the brackets is well-defined since
$p>|m|+1$. If $2k>|m|$, then
\bea
\tilde y(x) 
=
- \prod_{i=1}^k(x-t^0_i)\left(
\sum_{i=1}^k  \frac{a_{i,2}}{x-t^0_i} \right)
\eea
is a polynomial satisfying \Ref{WEp} of degree $<k$. Formula \Ref{wi} and inequality  $p>k+n$ imply \Ref{de}.
The first part of Theorem \ref{thm 2dF} is proved.

Let there exist a polynomial $\tilde y(x)$ satisfying equation
\Ref{WEp}. Adding to $\tilde y(x)$ a suitable polynomial of the form 
$c(x^p)y(x)$ for some $c(x)\in\FF_p[x]$ if necessary,
we may assume that
$\deg\tilde y(x) - \deg y(x) \not\equiv 0$ mod $p$. Then \Ref{de} holds, $k\leq |m|+1$ and $k\ne |m|+1-k$.

By formula \Ref{dern} and Lemma \ref{sfll} we have 
\bean
\label{derp}
\phantom{aaa}
\Big(\frac{\tilde y(x)}{y(x)}\Big)' = a_{|m|-2k}x^{|m|-2k} + \dots +a_0 +
\sum_{i=1}^k \Big(
\frac{a_{i,2}}{(x-t^0_i)^2}  + \frac{a_{i,1}}{x-t^0_i}\Big) \ \on{if} \ 2k\leq |m|
\eean
and 
\bean
\label{der1p}
\Big(\frac{\tilde y(x)}{y(x)}\Big)' = 
\sum_{i=1}^k \Big(
\frac{a_{i,2}}{(x-t^0_i)^2}  + \frac{a_{i,1}}{x-t^0_i}\Big) \ \on{if} \ 2k> |m|.
\eean
The function $\frac{\tilde y(x)}{y(x)}$ has a unique decomposition into
the sum of a polynomial and simple fractions.  The term by term derivative of that decomposition equals
the right-hand side of \Ref{derp} or \Ref{der1p}. Hence  all of coefficients $a_{i,1}$ must be zero. 
Hence the roots of $y(x)$ satisfy the Bethe ansatz equations.
\end{proof}

\begin{rem} This construction assigns 
to a solution $(t^0,z^0)$ of the Bethe ansatz equations \Ref{BAEp}
the two-dimensional
subspace $\langle \tilde y(x), y(x)\rangle$ of the space of polynomials $\FF_p[x]$ 
such that $\deg y(x)=k, \deg\tilde y(x)=|m|-k+1$, $ \Wr(y(x),\tilde y(x))=T(x)$.
That subspace is a point of the Grassmannian of two-dimensional subspaces in
$\FF_p[x]$.
\end{rem}

\section{Example:  the case $k=1$ }
\label{sec ex}

\subsection{ Gaudin model on $\Sing L^{\ox m}[|m|-2]$ }

Let  $m = ( m_1, \dots , m_n) \in \Z_{>0}^n$ and $p>|m|+1$. Consider the Gaudin model on
$\Sing L^{\ox m}[|m|-2]$ over $\FF_p$ . That means that $k=1$ in the notations of the previous sections.  
A basis of $L^{\ox m}[|m|-2]$  is formed by the vectors
\bean
\label{def f}
f^{(s)} = v_{m_1}\ox\dots\ox v_{s-1}\ox fv_{m_s}\ox v_{s+1}\ox\dots\ox v_{m_n},
\quad s=1,\dots,n.
\eean
We have 
\bean
\label{def Sing}
\Sing L^{\ox m}[|m|-2] = \Big\{\sum_{s=1}^n c_s f^{(s)}\ |\ c_s\in\FF_p\ \on{and}\ \sum_{s=1}^n m_sc_s =0\Big\}.
\eean
For $s=1,\dots,n$, define the vectors   $w_s\in \Sing L^{\ox m}[|m|-2]$ by the formula
\bean
\label{ w_j}
w_s = f^{(s)}-\frac{m_s}{|m|}\sum_{l=1}^n f^{(l)}.
\eean
We have
\bean
\label{sum=0}
w_1+\dots+w_n=0.
\eean
By \cite[Lemma 4.2]{V4}, any $n-1$ of these vectors form a basis of $\Sing L^{\ox m}[|m|-2] $.

Let $z^0=(z^0_1,\dots,z^0_n)\in\FF_p^n$ have distinct coordinates. 
For $i=1,\dots,n$,  the Gaudin Hamiltonian
$H_i(z^0)$ acts on $L^{\ox m}[|m|-2]$ by the formulas:
\bean
&&
f^{(s)} \mapsto \sum_{j\ne i}\frac{m_im_j/2}{z_i^0-z_j^0} f^{(s)}
+\frac 1{z^0_i-z^0_s}(m_sf^{(i)}-m_if^{(s)}),
  \qquad s\ne i,
\\
\notag
&&
f^{(i)} \mapsto \sum_{j\ne i}\frac{m_im_j/2}{z_i^0-z_j^0} f^{(i)} + \sum_{j\ne i} \frac 1{z_i^0-z_j^0}
(m_if^{(j)} -m_j f^{(i)}) .
\eean
Hence
\bean
\label{w*w}
&&
w_s \mapsto \sum_{j\ne i}\frac{m_im_j/2}{z_i^0-z_j^0} w_s
+\frac 1{z^0_i-z^0_s}(m_sw_i-m_iw_s),
  \qquad s\ne i,
\\
\notag
&&
w_i \mapsto \sum_{j\ne i}\frac{m_im_j/2}{z_i^0-z_j^0} w_i + \sum_{j\ne i} \frac 1{z_i^0-z_j^0}
(m_iw_j -m_j w_i) .
\eean
Recall that the Bethe algebra of $\Sing L^{\ox m}[|m|-2]$ is the subalgebra of $\End(\Sing L^{\ox m}[|m|-2])$ generated
by the Gaudin Hamiltonians $H_i(z^0)$, $i=1,\dots,n$, and the identity operator. 
We denote it by $\B(z^0,m)$.

\subsection{ Bethe ansatz equation and algebra $\AA(z^0,m)$}
Let  $m = ( m_1, \dots , m_n) \in \Z_{>0}^n$ and $p>|m|+1$. Let $z^0=(z^0_1,\dots,z^0_n)$
have distinct coordinates. The Bethe ansatz equations of $\Sing L^{\ox m}[|m|-2]$ is the single equation
\bean
\label{BE1}
\frac{m_1}{t-z_1^0} + \dots +\frac{m_n}{t-z_n^0} = 0.
\eean
Write
\bean
\label{PQ}
\frac{m_1}{t-z_1^0} + \dots +\frac{m_n}{t-z_n^0} = \frac{P(t)}{\prod_{s=1}^n(t-z_s^0)},
\eean
where
\bean
\label{P-Q}
&&
P(t) = P(t,z^0,m)= \sum_{s=1}^nm_s\prod_{l\ne s}(t-z_l^0).
\eean

Let $\A_{\FF_p}$ be the affine line over $\FF_p$ with coordinate $t$.
Denote $U=\A_{\FF_p}-\{z_1^0,\dots,z_n^0\}$. Let $\mc O(U)$ be the ring of rational functions on
the affine line  $\A_{\FF_p}$ regular on $U$.
Introduce the algebra 
\bean
\label{A(z)}
\AA(z^0,m) = \mc O(U)/(P(t)),\qquad 
\dim_{\FF_p}\AA(z^0,m) = n-1.
\eean
Here $(P(t))$ is the ideal generated by $P(t)$. 
Let $u_s \in \AA(z^0,m)$, $s=1,\dots,n$,  be the 
image of $\frac{m_s}{t-z_s^0}$ in $\AA(z^0,m)$. The elements $u_s$ span 
$\AA(z^0,m)$ as a vector
space and 
\bean
\label{repi}
u_1+\dots+ u_n= 0.
\eean
We have
\bean
\label{u*u}
&&
u_iu_s = \frac 1{z^0_i-z^0_s}(m_su_i-m_iu_s),
  \qquad s\ne i,
\\
\notag
&&
u_iu_i = \sum_{j\ne i} \frac 1{z_i^0-z_j^0} (m_iu_j -m_j u_i).
\eean
For a function $g(t)\in \mc O(U)$ denote $[g(u)]$ its image in $\AA(z^0,m)$.
The elements $[1], [t], \dots,[t^{n-2}]$ form a basis of $\AA(z^0,m)$ over $\FF_p$.
The defining relation in $\AA(z^0,m)$ is $P([t])=0$. The following formulas express the elements $[t^i]$
 in terms of the elements $u_s$.

\begin{lem}
\label{lem ut} 
We have
\bean
\label{1}
[1]  &= & \frac{-1}{|m|}(z^0_1u_1+\dots+z^0_nu_n),
\\
\notag
[t]  &= & \frac{1}{|m|^2}\left(\sum_{s=1}^n z^0_sm_s\right)\left(\sum_{s=1}^n z^0_su_s\right)
+\frac{-1}{|m|}\left(\sum_{s=1}^n (z^0_s)^2u_s\right),
\\
\notag
[t^i]
&=& 
\frac{-1}{|m|}\sum_{j=1}^i\sum_{s=1}^n (z^0_s)^jm_s [t^{i-j}] 
+
\frac{-1}{|m|}\sum_{s=1}^n (z^0_s)^{i+1}u_s, \qquad
i\geq 0.
\eean
\qed
\end{lem}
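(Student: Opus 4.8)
The plan is to obtain all three formulas from a single multiplication identity in $\AA(z^0,m)$, used in tandem with the relation $u_1+\dots+u_n=0$ from \Ref{repi}. First I would multiply $[t^{i+1}]$ by $u_s$. Since $u_s$ is by definition the image of $\frac{m_s}{t-z^0_s}\in\mc O(U)$ and $[t^{i+1}]$ is the image of the polynomial $t^{i+1}$, the product is the image of $t^{i+1}\frac{m_s}{t-z^0_s}$, which already lies in $\mc O(U)$. Performing the division inside $\mc O(U)$ through the elementary identity
\be
\frac{t^{i+1}}{t-z^0_s}=\sum_{l=0}^{i}(z^0_s)^l\>t^{i-l}+\frac{(z^0_s)^{i+1}}{t-z^0_s}
\ee
and then projecting to $\AA(z^0,m)$ gives
\be
[t^{i+1}]\>u_s=m_s\sum_{l=0}^{i}(z^0_s)^l\>[t^{i-l}]+(z^0_s)^{i+1}u_s .
\ee

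Next I would sum this identity over $s=1,\dots,n$ and invoke \Ref{repi}. The left-hand side collapses, since $[t^{i+1}]\sum_s u_s=[t^{i+1}]\cdot 0=0$, leaving
\be
0=\sum_{l=0}^{i}\Big(\sum_{s=1}^{n}m_s(z^0_s)^l\Big)[t^{i-l}]+\sum_{s=1}^{n}(z^0_s)^{i+1}u_s .
\ee
The decisive point is that the innocuous relation $\sum_s u_s=0$ becomes a genuine recursion once it is weighted by $[t^{i+1}]$. I would then split off the $l=0$ summand, whose scalar coefficient is $\sum_s m_s=|m|$ and whose power is $[t^i]$. Because $p>|m|+1$, the scalar $|m|$ is nonzero in $\FF_p$ and hence invertible, so solving for $[t^i]$ yields precisely the recursion of the third displayed formula (after relabeling $l$ by $j$ and writing $m_s(z^0_s)^j=(z^0_s)^jm_s$).

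Finally I would read off the first two formulas as the specializations $i=0$ and $i=1$ of this recursion. For $i=0$ the first sum is empty and the recursion reduces to $[1]=\frac{-1}{|m|}\sum_s z^0_s u_s$, which is the first formula. For $i=1$ the recursion gives $[t]=\frac{-1}{|m|}\big(\sum_s z^0_s m_s\big)[1]+\frac{-1}{|m|}\sum_s(z^0_s)^2u_s$; substituting the just-established expression for $[1]$ and combining the two factors of $\frac{-1}{|m|}$ into $\frac{1}{|m|^2}$ produces the second formula.

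The argument is almost entirely bookkeeping, and I do not anticipate a serious obstacle. The two points that require genuine care are, first, that the division must be carried out in $\mc O(U)$ and projected afterward, so that every rational function in sight is honestly regular on $U$ (each $\frac{1}{t-z^0_s}$ being a unit there); and second, that the hypothesis $p>|m|+1$ is exactly what guarantees $|m|\ne 0$ in $\FF_p$ and thus licenses the division by $|m|$. The one non-mechanical idea is the choice to multiply by $u_s$ and sum against \Ref{repi}: this is what turns a single linear relation among the $u_s$ into the entire tower of expansions for the $[t^i]$.
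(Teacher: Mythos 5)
Your proof is correct and complete. The paper offers no argument for Lemma \ref{lem ut} (the statement itself ends with the tombstone, the proof being regarded as immediate), so there is nothing to compare against; your route --- expanding $t^{i+1}/(t-z^0_s)$ inside $\mc O(U)$ via the geometric-series identity, projecting to $\AA(z^0,m)$, summing over $s$ against the relation \Ref{repi} so that the left-hand side collapses, and then solving for $[t^i]$ using that $|m|$ is invertible in $\FF_p$ because $p>|m|+1$ --- is precisely the natural computation that fills this gap, and all three displayed formulas follow exactly as you indicate, the first two being the specializations $i=0$ and $i=1$ of your recursion.
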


These formulas are related to formulas for 
the $\widehat{sl}_2$-action on tensor products of modules dual to Verma modules,
see \cite{SV2} and in particular to formula (11) in  \cite{SV2}.

\subsection{Isomorphism of $\AA(z^0,m)$ and $\B(z^0,m)$}
Define the isomorphism of vectors spaces 
\bean
\al\  :\  \AA(z^0,m)\ \to\  \Sing L^{\ox m}[|m|-2],
\qquad
u_s\mapsto w_s, \qquad s=1,\dots,n,
\eean
in particular, we have
\bean
\label{al1}
\langle1\rangle:= \al([1]) = \frac{-1}{|m|}(z^0_1w_1+\dots+z^0_n w_n).
\eean

\begin{thm}
\label{thm AB}

The map
\bean
\label{btf}
[1]\mapsto \on{Id}, \quad  u_s \mapsto H_s(z^0) - \sum_{j\ne s} \frac{m_sm_j/2}{z^0_s-z^0_j}\on{Id},
\qquad
s=1,\dots,n,
\eean
extends to an algebra isomorphism 
\bean
\label{bt}
\bt : \AA(z^0,m) \to \B(z^0,m),
\eean
such that  $\al(gh)=\bt(g).\al(h)$ for any $g,h\in \AA(z^0,m)$. 
\end{thm}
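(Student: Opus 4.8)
The plan is to realize $\beta$ as the transport, through the linear isomorphism $\alpha$, of the regular representation of the commutative algebra $\AA(z^0,m)$ on itself. For $g\in\AA(z^0,m)$ let $\mu_g\in\End(\AA(z^0,m))$ denote multiplication by $g$, so that $\mu_g(h)=gh$; since $\AA(z^0,m)$ is a commutative unital algebra, the assignment $g\mapsto\mu_g$ is a faithful algebra homomorphism into $\End(\AA(z^0,m))$ (faithful because $\mu_g([1])=g$). Using that $\alpha$ is a vector space isomorphism $\AA(z^0,m)\to\Sing L^{\ox m}[|m|-2]$, I would define
\[
\beta'(g):=\alpha\circ\mu_g\circ\alpha^{-1}\ \in\ \End(\Sing L^{\ox m}[|m|-2]).
\]
Then $\beta'$ is automatically a unital algebra homomorphism, and by construction $\beta'(g).\alpha(h)=\alpha(gh)$ for all $g,h$. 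This builds the intertwining relation $\alpha(gh)=\beta(g).\alpha(h)$ into the definition from the outset, and reduces the theorem to two tasks: identifying $\beta'$ with the map prescribed in \Ref{btf}, and computing its image.

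The key computational step is to verify that $\beta'$ agrees on generators with the prescribed map. Write $c_s=\sum_{j\ne s}\frac{m_sm_j/2}{z^0_s-z^0_j}$ and $\Tilde H_s:=H_s(z^0)-c_s\on{Id}$, so that \Ref{btf} sends $u_s\mapsto\Tilde H_s$ and $[1]\mapsto\on{Id}$. Formula \Ref{w*w} shows that the scalar $c_s\on{Id}$ is precisely the diagonal term of the action of $H_s(z^0)$ on the vectors $w_\bullet$; subtracting it yields $\Tilde H_s.w_l=\frac1{z^0_s-z^0_l}(m_lw_s-m_sw_l)$ for $l\ne s$ and $\Tilde H_s.w_s=\sum_{j\ne s}\frac1{z^0_s-z^0_j}(m_sw_j-m_jw_s)$. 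These are exactly the images under $\alpha$ of the products $u_su_l$ and $u_su_s$ recorded in \Ref{u*u}. Hence $\Tilde H_s.\alpha(u_l)=\alpha(u_su_l)=\beta'(u_s).\alpha(u_l)$ for every $l$; since the $u_l$ span $\AA(z^0,m)$ and $\alpha$ is surjective, this forces $\Tilde H_s=\beta'(u_s)$ as operators, while $\beta'([1])=\on{Id}$ follows from $\mu_{[1]}=\id$. Thus $\beta'$ is exactly the algebra extension of \Ref{btf}.

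It remains to identify the image of $\beta'$ with the Bethe algebra $\B(z^0,m)$ and to check injectivity. Because the $u_s$ span $\AA(z^0,m)$ as a vector space (and $[1]$ lies in their span by \Ref{al1}), the image $\beta'(\AA(z^0,m))$ is the linear span of the operators $\Tilde H_s$, and being the image of a unital algebra homomorphism it is a unital subalgebra of $\End(\Sing L^{\ox m}[|m|-2])$. Since $H_s(z^0)=\Tilde H_s+c_s\on{Id}$ and $\Tilde H_s=H_s(z^0)-c_s\on{Id}$, the families $\{\Tilde H_s,\on{Id}\}$ and $\{H_s(z^0),\on{Id}\}$ generate the same unital subalgebra, which is by definition $\B(z^0,m)$; hence $\beta'(\AA(z^0,m))=\B(z^0,m)$. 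Finally $\beta'$ is injective, for $\beta'(g)=0$ gives $\alpha(g)=\alpha(g\cdot[1])=\beta'(g).\alpha([1])=0$, and $\alpha$ is injective. Setting $\beta:=\beta'$ then completes the proof.

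I expect the only genuine obstacle to be the bookkeeping in the second paragraph: carefully matching the off-diagonal action \Ref{w*w} of the true Hamiltonians against the structure constants \Ref{u*u} of $\AA(z^0,m)$, and confirming that the subtracted scalars $c_s$ are chosen so that the diagonal terms cancel exactly. Everything else—well-definedness, associativity, and the intertwining identity—is delivered automatically by the transport-of-structure definition of $\beta'$, so no separate verification of relations in $\AA(z^0,m)$ is needed.
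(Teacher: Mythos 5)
Your proposal is correct and takes essentially the same approach as the paper: the paper's entire proof is the remark that \Ref{w*w} and \Ref{u*u} have identical structure constants once the scalars $\sum_{j\ne s}\frac{m_sm_j/2}{z^0_s-z^0_j}\on{Id}$ are subtracted, which is exactly the computational core of your second paragraph. Your transport-of-structure framing (conjugating the regular representation of $\AA(z^0,m)$ by $\al$) just makes explicit the routine bookkeeping — well-definedness, the intertwining identity, surjectivity onto $\B(z^0,m)$, and injectivity — that the paper leaves implicit.
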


\begin{proof}
The proof follows from comparing  \Ref{w*w} and \Ref{u*u}.
\end{proof}

\begin{rem}  Theorem \ref{thm AB}
says that the isomorphism $\al$ of vector spaces and the isomorphism $\bt$ of algebras
establish an isomorphism between
the $ \B(z^0,m)$-module $\Sing L^{\ox m}[|m|-2]$ and  the regular representation of the algebra
$\AA(z^0,m)$.

\end{rem}

\begin{exmp}
\label{ex1}
Theorem \ref{thm AB} in particular says that if $P(t)$ is irreducible then $\B(z^0,m) \cong \FF_{p^{n-1}}$,
where $\FF_{p^{n-1}}$ is the field with $p^{n-1}$ elements. 

For example, if  $n=3$, $m=(1,1,1)$, then 
$P(t,z^0) = 3t^2 -2(z^0_1+z^0_2+z^0_3)t + z^0_1z^0_2+z^0_1z^0_3+z^0_2z^0_3$.
If $p=5$, then $P(t,z^0)$ is irreducible  in $\FF_5[t]$
for all distinct $z^0_1, z^0_2, z^0_3\in\FF_5$  and $\B(z^0,m) \cong \FF_{25}$.
 
\end{exmp}

\begin{cor}  

We have
\bean
\label{dimB}
\dim_{\FF_p} \B(z^0,m) = n-1.
\eean

\end{cor}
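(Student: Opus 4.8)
The plan is to deduce the dimension count directly from Theorem \ref{thm AB}. That theorem supplies an algebra isomorphism $\bt : \AA(z^0,m) \to \B(z^0,m)$, and any algebra isomorphism is in particular an isomorphism of the underlying $\FF_p$-vector spaces. Hence I would argue
$$\dim_{\FF_p}\B(z^0,m) = \dim_{\FF_p}\AA(z^0,m),$$
and then invoke the dimension $\dim_{\FF_p}\AA(z^0,m) = n-1$ already recorded in \Ref{A(z)}. In this sense the corollary is a one-line consequence of the preceding theorem, with no genuinely new input.

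To keep the argument self-contained I would also spell out why $\dim_{\FF_p}\AA(z^0,m) = n-1$, since that is the only quantitative fact the corollary actually uses. The ring $\mc O(U)$ is the localization of $\FF_p[t]$ at the multiplicative set generated by $t - z_1^0, \dots, t - z_n^0$, and $P(t) = \sum_{s=1}^n m_s \prod_{l \ne s}(t - z_l^0)$ has degree $n-1$: its coefficient of $t^{n-1}$ is $|m| = m_1 + \dots + m_n$, which is nonzero in $\FF_p$ because $p > |m|+1$. Moreover $P$ is coprime to each $t - z_s^0$, since $P(z_s^0) = m_s \prod_{l \ne s}(z_s^0 - z_l^0) \ne 0$ (the coordinates are distinct and $0 < m_s \le |m| < p$). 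Consequently inverting the $t - z_s^0$ changes nothing after quotienting by $(P(t))$, so $\AA(z^0,m) \cong \FF_p[t]/(P(t))$, an $(n-1)$-dimensional algebra with basis $[1], [t], \dots, [t^{n-2}]$, in agreement with the basis listed after \Ref{A(z)}.

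The main obstacle, such as it is, lies entirely upstream rather than in the corollary itself: all the substantive work — constructing $\bt$ and verifying that it respects multiplication by matching the structure constants \Ref{u*u} of $\AA(z^0,m)$ against the Gaudin action \Ref{w*w} — is already carried out in Theorem \ref{thm AB}. Granting that theorem, the only points requiring any care are the invertibility of $|m|$ and the nonvanishing of $P$ at the points $z_s^0$ in characteristic $p$, and both are guaranteed by the standing hypothesis $p > |m|+1$. I therefore expect the write-up to be short, with the dimension of $\AA(z^0,m)$ being the one place worth a sentence of justification.
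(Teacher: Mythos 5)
Your proposal is correct and follows the same route as the paper: the corollary is an immediate consequence of the algebra isomorphism $\bt : \AA(z^0,m) \to \B(z^0,m)$ of Theorem \ref{thm AB} together with the dimension count $\dim_{\FF_p}\AA(z^0,m) = n-1$ recorded in \Ref{A(z)}. Your additional verification that $\AA(z^0,m) \cong \FF_p[t]/(P(t))$ — using that $|m|$ is invertible and $P(z_s^0) \ne 0$ under the hypothesis $p > |m|+1$ — is a sound justification of the fact the paper asserts without proof.
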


\begin{cor}  
The operators $\bt([1])=\on{Id}$, $\bt([t^i])$, $i=1,\dots,n-2$, form a basis of 
the vector space $\B(z^0,m)$ over $\FF_p$. 
The operator 
\bean
\{t\}:=\bt([t])
&=&
\frac{1}{|m|^2}\left(\sum_{s=1}^n z^0_sm_s\right)\left(\sum_{s=1}^n z^0_s
\left(H_s(z^0) - \sum_{j\ne s} \frac{m_sm_j/2}{z^0_s-z^0_j}\on{Id}\right)
\right)
\\
\notag
&+&
\frac{-1}{|m|}\left(\sum_{s=1}^n (z^0_s)^2\left(H_s(z^0)
 - \sum_{j\ne s} \frac{m_sm_j/2}{z^0_s-z^0_j}\on{Id}\right)\right)
\eean
 generates $\B(z^0,m)$ as an algebra with defining relation $P(\{t\})=0$.
\end{cor}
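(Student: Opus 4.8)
The plan is to obtain all three assertions from the algebra isomorphism $\bt:\AA(z^0,m)\to\B(z^0,m)$ of Theorem \ref{thm AB} by transport of structure, using the facts already recorded for $\AA(z^0,m)$: that $\dim_{\FF_p}\AA(z^0,m)=n-1$, that $[1],[t],\dots,[t^{n-2}]$ is a basis over $\FF_p$, and that $P([t])=0$ is its defining relation. First, since $\bt$ is in particular an $\FF_p$-linear bijection, it carries the basis $[1],[t],\dots,[t^{n-2}]$ of $\AA(z^0,m)$ to a basis $\bt([1])=\on{Id},\bt([t]),\dots,\bt([t^{n-2}])$ of $\B(z^0,m)$; and because $\bt$ is an algebra homomorphism, $[t^i]=[t]^i$ gives $\bt([t^i])=\{t\}^i$, so this basis is $\on{Id},\{t\},\dots,\{t\}^{n-2}$.

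To obtain the explicit formula for $\{t\}=\bt([t])$ I would simply apply $\bt$ to the expression for $[t]$ in Lemma \ref{lem ut}. Since $\bt$ is $\FF_p$-linear and $\bt(u_s)=H_s(z^0)-\sum_{j\ne s}\frac{m_sm_j/2}{z^0_s-z^0_j}\on{Id}$ by \Ref{btf}, substituting $\bt(u_s)$ for $u_s$ in that formula reproduces the displayed expression for $\{t\}$ term by term.

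For the generation and defining-relation claim, I would note that the $\FF_p$-subalgebra of $\AA(z^0,m)$ generated by $[t]$ contains every power $[t]^i$, hence the whole basis $[1],\dots,[t^{n-2}]$, so $[t]$ generates $\AA(z^0,m)$; applying the algebra homomorphism $\bt$ then shows $\{t\}$ generates $\B(z^0,m)$. The relation $P([t])=0$ transports to $P(\{t\})=\bt(P([t]))=0$. To see this is the defining relation, consider the surjection $\FF_p[s]\to\AA(z^0,m)$, $s\mapsto[t]$: its kernel is a principal ideal generated by a monic polynomial of degree $\dim_{\FF_p}\AA(z^0,m)=n-1$, and since $P$ lies in the kernel and has degree exactly $n-1$ (its leading coefficient is $|m|$, which is nonzero in $\FF_p$ because $p>|m|+1$), the kernel equals $(P(s))$. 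Transporting through $\bt$ yields $\B(z^0,m)\cong\FF_p[s]/(P(s))$ with $s\mapsto\{t\}$, so $P(\{t\})=0$ is the defining relation.

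The argument is a mechanical transport of structure through $\bt$, so there is no serious obstacle; the only point requiring care is the last one, that $P([t])=0$ is the \emph{minimal} rather than merely \emph{a} relation. This rests on the dimension count $\deg P=n-1=\dim_{\FF_p}\AA(z^0,m)$, which uses $|m|\not\equiv 0\pmod p$, guaranteed by $p>|m|+1$.
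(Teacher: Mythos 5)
Your proposal is correct and is essentially the paper's own (implicit) argument: the corollary is stated in the paper without proof precisely because it follows by transporting the basis $[1],[t],\dots,[t^{n-2}]$ of $\AA(z^0,m)$, the formula for $[t]$ in Lemma \ref{lem ut}, and the relation $P([t])=0$ through the algebra isomorphism $\bt$ of Theorem \ref{thm AB}, exactly as you do. Your additional verification that $P([t])=0$ is the \emph{minimal} relation---via $\deg P = n-1 = \dim_{\FF_p}\AA(z^0,m)$ together with $|m|\not\equiv 0 \pmod p$---correctly fills in a detail the paper takes for granted when it declares $P([t])=0$ to be the defining relation of $\AA(z^0,m)$.
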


\begin{cor}  

We have
\bean
\label{alw}
\left(H_s(z^0) - \sum_{j\ne s} \frac{m_sm_j/2}{z^0_s-z^0_j}\on{Id}\right).\langle1\rangle= w_s,
\qquad s=1,\dots,n.
\eean

\end{cor}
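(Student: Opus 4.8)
The plan is to recognize the displayed operator as the image $\bt(u_s)$ of $u_s$ under the algebra isomorphism $\bt$ of Theorem \ref{thm AB}, and then to deduce the identity directly from the module-compatibility relation $\al(gh)=\bt(g).\al(h)$ together with the defining rule $u_s\mapsto w_s$ for the vector-space isomorphism $\al$. In other words, the corollary is a formal consequence of Theorem \ref{thm AB} once the two sides are correctly decoded.

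First I would observe that, by the definition \Ref{btf} of $\bt$, the operator $H_s(z^0)-\sum_{j\ne s}\frac{m_sm_j/2}{z^0_s-z^0_j}\on{Id}$ appearing on the left-hand side is exactly $\bt(u_s)$, and that, by \Ref{al1}, the vector $\langle1\rangle$ is precisely $\al([1])$. Hence the left-hand side of \Ref{alw} equals $\bt(u_s).\al([1])$.

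Next I would invoke the intertwining property $\al(gh)=\bt(g).\al(h)$ from Theorem \ref{thm AB} with $g=u_s$ and $h=[1]$, which gives $\bt(u_s).\al([1])=\al(u_s\cdot[1])$. Since $\AA(z^0,m)=\mc O(U)/(P(t))$ is a quotient of $\mc O(U)$, whose multiplicative unit is the constant function $1$, the element $[1]$ is the unit of $\AA(z^0,m)$, so $u_s\cdot[1]=u_s$. Therefore the left-hand side equals $\al(u_s)$, which is $w_s$ by the definition of $\al$, and this is the assertion.

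I expect no genuine obstacle here, since all the substantive work is already carried by Theorem \ref{thm AB}; the only point deserving a word is the identification of $[1]$ with the unit of $\AA(z^0,m)$, which is immediate from its description as the image of the constant function $1$ in the quotient ring. One could alternatively note that $\bt$, being an algebra isomorphism onto $\B(z^0,m)$ that sends $[1]$ to $\on{Id}$, must carry the unit to the unit, forcing $[1]$ to be the unit of the source.
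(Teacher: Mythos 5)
Your proposal is correct and is essentially the paper's own argument: the paper states this as an immediate corollary of Theorem \ref{thm AB} with no separate proof, and the intended derivation is exactly your computation $\bt(u_s).\langle1\rangle=\bt(u_s).\al([1])=\al(u_s\cdot[1])=\al(u_s)=w_s$, using that $[1]$ is the unit of $\AA(z^0,m)$.
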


\subsection{Eigenvectors of $\B(z^0,m)$ and the polynomial $P(t)$}

The elements of the algebra $\AA(z^0,m)$  have the form $Q([t])$, where
$Q(t)\in \FF_p[t]$, $\deg Q(t)< n-1$. An element $Q([t])$ is an eigenvector of 
all multiplication operators of $\AA(z^0,m)$ if and only if $Q([t])$ is an eigenvector of
the multiplication by $[t]$. If $t^0\in\FF_p$ is the eigenvalue, then
$([t]-t^0)Q([t])=0$, that is, 
\bean
\label{er}
  (t - t^0)Q(t) = \on{const}\,P(t), \qquad \on{const}\in \FF_p.
\eean
Hence the set of eigenlines of all multiplication operators of $\AA(z^0,m)$ is in one-to-one correspondence with
the set of distinct roots of the polynomial $P(t)$, namely,  a root $t^0$ with decomposition
$(t - t^0)Q(t) = P(t)$ corresponds to the line generated by the element $Q([t])$.

\begin{cor}
\label{ceiro} 
The set of eigenlines of $\B(z^0,m)$ are in one-to-one correspondence with
the set of distinct roots of the polynomial $P(t)$, namely,  a root $t^0$ with decomposition
$(t - t^0)Q(t) = P(t)$ for some $Q(t)\in\FF_p[t]$ corresponds to the line generated by the vector
\bean
\label{qe}
\omega(t^0,z^0):= Q(\{t\}).\langle 1\rangle \ \ \in\  \Sing L^{\ox m}[|m|-2].
\eean
\qed
\end{cor}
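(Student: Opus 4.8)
The plan is to deduce the corollary formally from Theorem \ref{thm AB} together with the description of the eigenlines of $\AA(z^0,m)$ given in the paragraph preceding the statement; no computation beyond what is already recorded is needed.

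First I would note that the pair $(\al,\bt)$ is a module isomorphism. By Theorem \ref{thm AB} we have $\al(gh)=\bt(g).\al(h)$ for all $g,h\in\AA(z^0,m)$, so under the algebra isomorphism $\bt$ the action of an operator $\bt(g)\in\B(z^0,m)$ on a vector $\al(h)\in\Sing L^{\ox m}[|m|-2]$ corresponds to multiplication by $g$ in the regular representation of $\AA(z^0,m)$. Since $\bt$ is surjective, a line $\al(\ell)\subset\Sing L^{\ox m}[|m|-2]$ is a common eigenline of all operators of $\B(z^0,m)$ if and only if $\ell$ is a common eigenline of all multiplication operators of $\AA(z^0,m)$; because $\al$ is a linear isomorphism it therefore induces a bijection between the two sets of eigenlines.

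Next I would transport the description already obtained on the $\AA(z^0,m)$-side. By the discussion before the corollary, the common eigenlines of the multiplication operators of $\AA(z^0,m)$ are in one-to-one correspondence with the distinct roots of $P(t)$, a root $t^0$ with $(t-t^0)Q(t)=P(t)$ giving the line $\langle Q([t])\rangle$. Applying $\al$, using $\al(gh)=\bt(g).\al(h)$ with $g=Q([t])$, $h=[1]$, the fact that $\bt$ is an algebra homomorphism (so $\bt(Q([t]))=Q(\bt([t]))=Q(\{t\})$ since $\{t\}=\bt([t])$), and the definition $\al([1])=\langle1\rangle$, I obtain
\[
\al\bigl(Q([t])\bigr)=\al\bigl(Q([t])\cdot[1]\bigr)=\bt\bigl(Q([t])\bigr).\al([1])=Q(\{t\}).\langle1\rangle=\omega(t^0,z^0),
\]
which is exactly the claimed generator. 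Combining the two steps yields the stated correspondence. I expect no genuine obstacle here: the substantive content lives in Theorem \ref{thm AB} and in the root analysis on the $\AA(z^0,m)$-side, and the only point needing care is the displayed evaluation, namely verifying coefficientwise that $\bt(Q([t]))=Q(\{t\})$, which is immediate from $\bt$ respecting products and from $\{t\}=\bt([t])$.
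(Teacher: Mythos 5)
Your proposal is correct and follows exactly the route the paper intends: the paper marks this corollary as immediate (the \verb|\qed| is in the statement), since it is just the transport of the eigenline description of $\AA(z^0,m)$ from the preceding paragraph through the module isomorphism $(\al,\bt)$ of Theorem \ref{thm AB}. Your two steps---that $\al(gh)=\bt(g).\al(h)$ turns common eigenlines of multiplication operators into common eigenlines of $\B(z^0,m)$ bijectively, and that $\al(Q([t]))=\bt(Q([t])).\al([1])=Q(\{t\}).\langle1\rangle$---are precisely the details the paper leaves to the reader.
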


Thus we have two ways to construct the eigenlines of $\B(z^0,m)$ from roots $t^0$ of the polynomial $P(t)$. 
The first is given by Theorem \ref{thm BA} and the eigenline is generated by the vector
\bean
\label{biv}
W_{1,n,m}(t^0,z^0)= \sum_{s=1}^n\frac1{t^0-z_s}f^{(s)}=\sum_{s=1}^n\frac 1{t^0-z_s}w_s.
\eean
The second is given by Corollary \ref{ceiro} and the eigenline is generated by the vector $Q(\{t\}).\langle 1\rangle$.

\begin{thm}
\label{thm coi}
The two eigenlines coincide, more precisely, we have
\bean
\label{coi}
W_{1,n,m}(t^0,z^0) = \on{const} Q(\{t\}).\langle 1\rangle,\qquad \on{const}\in\FF_p.
\eean

\end{thm}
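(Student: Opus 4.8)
The plan is to show that both $W_{1,n,m}(t^0,z^0)$ and $\om(t^0,z^0)$ are nonzero eigenvectors of $\B(z^0,m)$ affording one and the same character, so that by the bijection of Corollary \ref{ceiro} they span the single eigenline attached to the root $t^0$. First I would record, using \Ref{biv} and the relation $\sum_s\frac{m_s}{t^0-z^0_s}=0$, that $W_{1,n,m}(t^0,z^0)=\sum_{s=1}^n\frac1{t^0-z^0_s}w_s$; this is nonzero because the scalars $\frac1{t^0-z^0_s}$ are pairwise distinct while, since any $n-1$ of the $w_s$ form a basis, the only relation among all $n$ of them is $w_1+\dots+w_n=0$ by \Ref{sum=0}. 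By Theorem \ref{thm BAp} this vector is an eigenvector of each $H_s(z^0)$ with eigenvalue $\la_s(t^0,z^0)$ from \Ref{lap}. Hence, by the definition \Ref{btf} of $\bt$, it is an eigenvector of $\bt(u_s)=H_s(z^0)-\sum_{j\ne s}\frac{m_sm_j/2}{z^0_s-z^0_j}\on{Id}$; the term subtracted in \Ref{btf} is exactly the first sum in \Ref{lap}, so the eigenvalue collapses to $\frac{m_s}{t^0-z^0_s}$. Since the $u_s$ span $\AA(z^0,m)$, it follows that $W_{1,n,m}(t^0,z^0)$ affords the character $\chi_{t^0}\colon\AA(z^0,m)\to\FF_p$, $u_s\mapsto\frac{m_s}{t^0-z^0_s}$, namely evaluation at $t^0$ of the generators $u_s=\big[\frac{m_s}{t-z^0_s}\big]$.

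Next I would identify the same character for $\om(t^0,z^0)$. Using the intertwining property $\al(gh)=\bt(g).\al(h)$ of Theorem \ref{thm AB}, one has $\om(t^0,z^0)=Q(\{t\}).\langle1\rangle=\bt(Q([t])).\al([1])=\al(Q([t]))$. Because $(t-t^0)Q(t)=P(t)$, the element $Q([t])$ is an eigenvector of multiplication by $[t]$ with eigenvalue $t^0$, as already observed before Corollary \ref{ceiro}; equivalently it affords the character $\chi_{t^0}$. Thus both $W_{1,n,m}(t^0,z^0)$ and $\om(t^0,z^0)$ are nonzero eigenvectors of $\B(z^0,m)$ with $\{t\}$-eigenvalue $t^0$, and Corollary \ref{ceiro} attaches exactly one eigenline to the root $t^0$; the two vectors therefore span that line, which is \Ref{coi} with a nonzero constant.

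If one prefers to avoid characters, the identical conclusion follows from a direct computation inside $\AA(z^0,m)$: the partial-fraction identity $\frac1{(t^0-z^0_s)(t-z^0_s)}=\frac1{t^0-t}\big(\frac1{t-z^0_s}-\frac1{t^0-z^0_s}\big)$, combined with $\sum_s\frac{m_s}{t^0-z^0_s}=0$ and $\frac{P(t)}{t-t^0}=Q(t)$, yields $\sum_{s=1}^n\frac1{t^0-z^0_s}u_s=\big[\frac{-Q(t)}{\prod_{s}(t-z^0_s)}\big]$; both this element and $[Q(t)]$ are annihilated by $[t]-t^0$, and applying $\al$ recovers $W_{1,n,m}(t^0,z^0)$ and $\om(t^0,z^0)$ respectively. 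The only substantive point, and the step to treat with care, is the eigenvalue collapse: the subtraction built into \Ref{btf} is precisely what cancels the $\sum_{j\ne s}\frac{m_sm_j/2}{z^0_s-z^0_j}$ contribution in the Gaudin eigenvalue, so that the Bethe eigenvector and the algebraic eigenvector carry the same character $\chi_{t^0}$; uniqueness of the $t^0$-eigenline is then supplied by Corollary \ref{ceiro}.
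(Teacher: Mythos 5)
Your proposal is correct, and your main argument takes a genuinely different route from the paper's. The paper's proof is a one-line computation inside $\AA(z^0,m)$: writing $\al^{-1}(W_{1,n,m}(t^0,z^0))=\sum_s\frac1{t^0-z^0_s}u_s$ and using $\frac{t-t^0}{t-z^0_s}=1-\frac{t^0-z^0_s}{t-z^0_s}$, it shows directly that $([t]-t^0)$ annihilates this element, the two ingredients being exactly the Bethe ansatz equation \Ref{BE1} and the relation \Ref{repi}; uniqueness of the $t^0$-eigenline (the discussion preceding Corollary \ref{ceiro}) then forces proportionality to $Q([t])$, whose image under $\al$ is $\om(t^0,z^0)$. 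Your fallback computation --- partial fractions giving $\sum_s\frac1{t^0-z^0_s}u_s=\big[-Q(t)/\prod_s(t-z^0_s)\big]$, then multiplying by $[t]-t^0$ --- is essentially this same argument, just rearranged. Your primary argument, by contrast, never computes in $\AA(z^0,m)$: it imports Theorem \ref{thm BAp} to know that $W_{1,n,m}(t^0,z^0)$ is a common eigenvector with the Gaudin eigenvalues \Ref{lap}, observes that the shift built into \Ref{btf} collapses those eigenvalues to $\frac{m_s}{t^0-z^0_s}$, i.e.\ to the evaluation character at $t^0$, and then appeals to the same uniqueness statement from Corollary \ref{ceiro}. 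What this buys: it is conceptually transparent (the two constructions agree because they realize the same character of $\AA(z^0,m)$), it explains why the subtraction in \Ref{btf} is the natural normalization, and it yields the sharper conclusion that the constant in \Ref{coi} is nonzero, since you verify $W_{1,n,m}(t^0,z^0)\ne0$ (correctly: the only linear relation among $w_1,\dots,w_n$ is \Ref{sum=0}, and the coefficients $\frac1{t^0-z^0_s}$ are pairwise distinct, hence not all equal). What it costs: it rests on the heavier Theorem \ref{thm BAp}, whose proof goes through the Schechtman--Varchenko identities, whereas the paper's computation is self-contained within Section \ref{sec ex} and in fact gives an independent proof, for $k=1$, that the Bethe vector is an eigenvector. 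One small point you should make explicit: Theorem \ref{thm BAp} requires $(t^0,z^0)$ to have distinct coordinates, so you need that a root $t^0$ of $P(t)$ cannot equal any $z^0_s$; this holds because $P(z^0_s)=m_s\prod_{l\ne s}(z^0_s-z^0_l)\ne0$ in $\FF_p$, as $0<m_s<p$ and the $z^0_l$ are distinct --- the same fact is implicitly used in \Ref{biv}.
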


\begin{proof} We need to show that $([t]-t^0)\al^{-1}(W_{1,n,m}(t^0,z^0))=0$ in $\AA(z^0,m)$. Indeed
\bea
([t]-t^0)\al^{-1}(W_{1,n,m}(t^0,z^0)) = \sum_{s=1}^n\frac{m_s}{t^0-z_s}\Big[\frac{t-t^0}{t-z_s}\Big]
=\sum_{s=1}^n\frac{m_s}{t^0-z_s}[1] - \sum_{s=1}^n\Big[\frac{m_s}{t-z_s}\Big] = 0
\eea
due to the Bethe ansatz equation \Ref{BE1} and formula \Ref{repi}.

\end{proof}

\subsection{Algebra  $\mc C(T)$}  
In this section, $p$ is a prime number, $p>n+1$.  Fix a monic polynomial
\bean
\label{sigma}
\phantom{aaa}
T(x) = x^n+\si_1x^{n-1}+\si_2x^{n-2} + \dots + \si_n \ \in \ \FF_p[x].
\eean
We consider the two-dimensional subspaces  $V\subset \FF_p[x]$ consisting of polynomials
of degree $n$ and 1 such that $\Wr(g_1(x), g_2(x)) = \on{const}  T(x)$, where 
 $g_1(x), g_2(x)$ is any basis
of $V$ and $\on{const}\in\FF_p$. 
Such a subspace $V$ has a unique basis of the form
\bean
\label{ba}
g_1(x)= x^n + a_1x^{n-1}+\dots+a_{n-2}x^2+a_0,
\qquad
g_2=x-t
\eean
with
\bean
\label{wrgg}
\Wr(g_1(x),g_2(x)) = (n-1)T(x).
\eean
Equation \Ref{wrgg} is equivalent to the system of equations
\bean
\label{wreq}
&&
(n-r-1)a_r - (n-r+1)a_{r-1}t - (n-1)\si_r=0,
\qquad  
r=1,\dots,n-1,
\\
\notag
&&
a_n - (n-1)\si_n=0,
\eean
where  $a_0=1$.  Expressing $a_1$ from the first equation in terms of $t$, then expressing $a_2$ from the 
first and second equations in terms of $t$ and so on, we can reformulate system \Ref{wreq} as the system
of equations
\bean
\label{wreq1}
&&
a_r - \frac{n-1}2 ( nt^r+(n-1)\si_1 t^{r-1} + \dots + (n-r)\si_r)=0, \qquad 
r=1,\dots, n-2,
\\
\label{wreq2}
&&
nt^{n-1}+(n-2)\si_1 t^{n-2} + \dots + 2 \si_2 t+\si_1 =0,
\\
\label{wreq3}
&&
a_n + \si_n = 0.
\eean
Notice that equation \Ref{wreq2} is the equation $\frac {dT }{d t}(t)=0$, where
$T(x)$ is defined in \Ref{sigma}.

Let $I\subset \FF_p[t, a_1,\dots,a_{n-2},a_n]$ be the ideal
generated by $n$ polynomials staying in the left-hand sides
of the equations of the system \Ref{wreq}. Define the algebra 
\bean
\label{mcc}
\mc C(T) =  \FF_p[t, a_1,\dots,a_{n-2},a_n]/ I.
\eean
Let $J \subset \FF_p[t]$ be the ideal generated by $\frac {d T}{d t}(t)$. Define the algebra
\bean
\label{mcct}
\tilde{\mc C}(T) =  \FF_p[t]/ J.
\eean

\begin{lem}
\label{lem JI} 
We have an isomorphism of algebras
\bean
\label{iCtC}
\tilde{\mc C}(T) \to \mc C(T),\quad [t] \mapsto [t].
\eean
\qed
\end{lem}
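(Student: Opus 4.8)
The plan is to show that in $\mc C(T)$ the generators $a_1,\dots,a_{n-2}$ and $a_n$ are all forced to equal explicit polynomials in $t$, so that $\mc C(T)$ is generated by the single class $[t]$ and the only surviving relation on powers of $t$ is $\frac{dT}{dt}(t)=0$, which is exactly the defining relation of $\tilde{\mc C}(T)$.

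First I would use the reformulation of system \Ref{wreq} as \Ref{wreq1}--\Ref{wreq3} carried out just before the lemma. The key point is that the coefficient of $a_r$ in the $r$-th equation of \Ref{wreq} is $n-r-1$, which for $r=1,\dots,n-2$ runs through $\{1,\dots,n-2\}$ and is therefore invertible in $\FF_p$ because $p>n+1$. Hence one solves triangularly: the $r$-th equation expresses $a_r$ as the polynomial $\phi_r(t)$ of \Ref{wreq1} once $a_1,\dots,a_{r-1}$ have been written through $t$ (using the convention $a_0=1$); the last equation of \Ref{wreq} (equivalently \Ref{wreq3}) pins $a_n$ to a constant $c\in\FF_p$; and in the equation with $r=n-1$ the coefficient of $a_{n-1}$ vanishes, so after back-substitution it becomes the relation \Ref{wreq2} in $t$ alone, identified in the text with $\frac{dT}{dt}(t)=0$. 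Because each step only rescales a generator by a unit and adds $\FF_p[t]$-combinations of the others, the ideal is unchanged, i.e. $I=(a_1-\phi_1(t),\dots,a_{n-2}-\phi_{n-2}(t),\,a_n-c,\,\frac{dT}{dt}(t))$; in particular $\frac{dT}{dt}(t)\in I$.

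Then I would build the isomorphism explicitly. The assignment $[t]\mapsto[t]$ gives a well-defined homomorphism $\tilde{\mc C}(T)\to\mc C(T)$ since $\frac{dT}{dt}(t)\in I$, and it is surjective because modulo $I$ each $[a_r]=\phi_r([t])$ and $[a_n]=c$, so $[t]$ generates $\mc C(T)$. For the inverse I would define $\gamma:\mc C(T)\to\tilde{\mc C}(T)$ on generators by $[t]\mapsto[t]$, $[a_r]\mapsto[\phi_r(t)]$, $[a_n]\mapsto[c]$, and check that $\gamma$ annihilates every generator of $I$: the defining relations of $\phi_r$ and $c$ make the first $n-2$ relations and the $a_n$-relation vanish identically in $\FF_p[t]$, while the $r=n-1$ relation maps to a unit multiple of $\frac{dT}{dt}(t)$, which is $0$ in $\tilde{\mc C}(T)$. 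Since $\gamma$ and $[t]\mapsto[t]$ both restrict to the identity on generators in either composition, they are mutually inverse algebra homomorphisms, proving the claimed isomorphism.

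The only substantive point is the reformulation itself, namely that back-substituting $a_r=\phi_r(t)$ into the $r=n-1$ equation telescopes precisely to $\frac{dT}{dt}(t)$ up to a nonzero scalar; this is exactly the passage \Ref{wreq1}--\Ref{wreq2} established before the lemma, so the remaining work is the elimination bookkeeping above. The hypothesis $p>n+1$ is used to invert the pivots $n-r-1$ and to keep the leading coefficient $n$ of $\frac{dT}{dt}$ nonzero, whence $\deg\frac{dT}{dt}=n-1$ and $\dim_{\FF_p}\tilde{\mc C}(T)=n-1$, in agreement with $\dim_{\FF_p}\mc C(T)$.
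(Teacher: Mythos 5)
Your proof is correct and matches the paper's intended argument: the paper states the lemma with no further proof precisely because it follows from the triangular elimination \Ref{wreq}$\Rightarrow$\Ref{wreq1}--\Ref{wreq3} carried out immediately before it, which is exactly what you spell out (invertibility of the pivots $n-r-1$ under $p>n+1$, expression of each $a_r$ and $a_n$ as polynomials in $t$, and the surviving relation $\frac{dT}{dt}(t)=0$). Your explicit construction of the two mutually inverse homomorphisms is a clean way of recording the bookkeeping the paper leaves implicit.
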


Let $m^0=(1,\dots,1)\in\Z^n_{>0}$. Let $z^0=(z^0_1,\dots,z^0_n)\in\FF_p^n$ be
a point with distinct coordinates.
The Bethe ansatz for $\Sing L^{\ox m^0}[|m^0| -2]$ has the form

\bean
\label{BAE 10}
\frac 1{t-z^0_1} + \dots +\frac 1{t-z^0_n} =\frac{R(t)}{T(t)} =0,
\eean
where 

\bean
\label{T=prod}
T(t)=\prod_{s=1}^n(t-z^0_s),
\qquad
R(t)=\frac{dT}{dt}(t).
\eean 
 Hence for this $T(x)$ we have 
 \bean
 \label{C=A}
 \tilde{\mc C}(T)=\AA(z^0, m^0).
 \eean

 \begin{cor}
 \label{cor ABC}
 For $T(t)$ and $R(t)$ as in \Ref{T=prod} we have
 \bean
 \label{ABC}
\AA(z^0,m^0)\cong \B(z^0,m^0)\cong
\mc C(T)
\eean 
and the $ \B(z^0,m)$-module $\Sing L^{\ox m^0}[|m^0|-2]$  is isomorphic to the regular representation
of the algebra $\mc C(T)$.

\end{cor}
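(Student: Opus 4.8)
The plan is to deduce Corollary \ref{cor ABC} by chaining together identifications already in hand, namely Theorem \ref{thm AB}, Lemma \ref{lem JI}, and the equality \Ref{C=A}; essentially no new computation is needed beyond a short verification of \Ref{C=A}. I would begin by recording that for $m^0=(1,\dots,1)$ the polynomial $P$ of \Ref{P-Q} coincides with $\tfrac{dT}{dt}$: applying the product rule to $T(t)=\prod_{s=1}^n(t-z^0_s)$ gives
\[
\frac{dT}{dt}(t)=\sum_{s=1}^n\prod_{l\ne s}(t-z^0_l)
=\sum_{s=1}^n m^0_s\prod_{l\ne s}(t-z^0_l)=P(t,z^0,m^0),
\]
so that $P(t)=R(t)$, in agreement with \Ref{T=prod}.

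Next I would justify \Ref{C=A}, that $\tilde{\mc C}(T)=\AA(z^0,m^0)$. By definition $\tilde{\mc C}(T)=\FF_p[t]/(P)$, while $\AA(z^0,m^0)=\mc O(U)/(P)$ and $\mc O(U)$ is the localization of $\FF_p[t]$ at the multiplicative set generated by the $t-z^0_s$. The point is that each $z^0_s$ is a simple root of $T$, so $P(z^0_s)=\tfrac{dT}{dt}(z^0_s)=\prod_{l\ne s}(z^0_s-z^0_l)\ne0$; hence $\gcd(t-z^0_s,\,P)=1$ and the class of $t-z^0_s$ is already a unit in $\FF_p[t]/(P)$. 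Localizing a ring at elements that are invertible in a given quotient does not change that quotient, so the natural map $\FF_p[t]/(P)\to\mc O(U)/(P)$ is an isomorphism, which is \Ref{C=A}. As a consistency check, all three relevant algebras have $\FF_p$-dimension $n-1$: this is \Ref{A(z)} for $\AA(z^0,m^0)$, it is $\deg P=n-1$ for $\tilde{\mc C}(T)$, and Lemma \ref{lem JI} equates $\mc C(T)$ with $\tilde{\mc C}(T)$.

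Finally I would assemble the chain. Theorem \ref{thm AB} gives the algebra isomorphism $\bt\colon\AA(z^0,m^0)\to\B(z^0,m^0)$, while \Ref{C=A} together with Lemma \ref{lem JI} gives $\AA(z^0,m^0)=\tilde{\mc C}(T)\cong\mc C(T)$; composing these yields the isomorphisms \Ref{ABC}. For the module assertion, I would recall from the remark following Theorem \ref{thm AB} that the vector-space isomorphism $\al$ and the algebra isomorphism $\bt$ identify the $\B(z^0,m^0)$-module $\Sing L^{\ox m^0}[|m^0|-2]$ with the regular representation of $\AA(z^0,m^0)$; transporting through $\AA(z^0,m^0)\cong\mc C(T)$ then identifies it with the regular representation of $\mc C(T)$. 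The only genuine obstacle is the verification of \Ref{C=A}, that is, that replacing $\mc O(U)$ by all of $\FF_p[t]$ leaves the quotient by $(P)$ unchanged; this rests precisely on $P$ sharing no root with $T$.
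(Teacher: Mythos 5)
Your proof is correct and follows essentially the same route as the paper: the corollary is obtained there by chaining Theorem \ref{thm AB}, Lemma \ref{lem JI}, and the identification \Ref{C=A}, exactly as you do, with the module statement coming from the remark after Theorem \ref{thm AB}. Your localization argument for \Ref{C=A} (that each $t-z^0_s$ is a unit modulo $P$ since $P(z^0_s)=\prod_{l\ne s}(z^0_s-z^0_l)\ne 0$) fills in a detail the paper asserts with a bare ``Hence,'' and is a welcome addition.
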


\subsection{Wronski map}

Let $X_n$ be the affine space of all two-dimensional subspaces $V\subset \FF_p[x]$, each of which 
consists of polynomials of degree $n$ and  1. The space $X_n$ is identified with
the space of pairs of polynomials given by formula \Ref{ba}. 
Let $\FF_p[x]_{n} \subset \FF_p[x]$ be the affine subspace of monic polynomials of degree $n$.
Introduce the {\it Wronski map}
\bean
\label{WRO}
W_n : X_n \to \FF_p[x]_{n}, \quad \langle g_1(x),g_2(x) \rangle \mapsto \frac 1{n-1} \Wr(g_1(x), g_2(x)),
\eean
cf. \cite{MTV3}. 
The algebra $\mc C(T)$ is  the algebra of functions on the fiber $W^{-1}(T)$ of the Wronski map.

\begin{exmp} Let $n=3$ and $T(x) = x^3+\si_1x^2+\si_2x+\si_3$. Then 
$W_3^{-1}(T)$ consists of one point if the discriminant $\si_1^2-3\si_2$ of $\frac {dT}{dx}(x)$ 
equals zero; $W_3^{-1}(T)$ consists of two points if the discriminant is a nonzero square,
and is empty otherwise. Thus, $p^2$ points of $X_3$ have one preimage, $\frac{p-1}2p^2$ points
have two preimages, and  $\frac{p-1}2p^2$ points have none.
Cf. Example \ref{ex1}.

\end{exmp}

\bigskip

\end{document}